\newcommand{\email}[1]{\href{mailto:#1}{#1}}
\definecolor{darkred}{rgb}{.8,0,0}
\definecolor{tocolor}{rgb}{.1,.1,.1}
\definecolor{urlcolor}{rgb}{.2,.2,.6}
\definecolor{linkcolor}{rgb}{.1,.1,.5}
\definecolor{citecolor}{rgb}{.4,.2,.1}
\definecolor{gray}{rgb}{.8,.8,.8}
\newcommand{\thdef}[2]{
	\newaliascnt{#1}{theorem}
	\newtheorem{#1}[#1]{#2}
	\aliascntresetthe{#1}
	\newtheorem*{#1*}{#2}
	\expandafter\newcommand\expandafter{\csname #1autorefname\endcsname}{#2}
}
\newtheorem{theorem}{Theorem}[section]
\newtheorem*{theorem*}{Theorem}
\theoremstyle{definition}
\theoremstyle{remark}
\newenvironment{example}
  {\pushQED{\qed}\examplex}
  {\popQED\endexamplex}
\newenvironment{remark}
  {\pushQED{\qed}\remarkx}
  {\popQED\endremarkx}
\newcommand{\defn}[1]{\textbf{\emph{#1}}}
\newcommand{\rbrac}[1]{\left(#1\right)}
\newcommand{\CC}{\mathbb{C}}
\newcommand{\PP}{\mathbb{P}}
\newcommand{\ZZ}{\mathbb{Z}}
\newcommand{\T}{\mathsf{T}}
\newcommand{\W}{\mathsf{W}}
\newcommand{\Z}{\mathsf{Z}}
\newcommand{\Y}{\mathsf{Y}}
\newcommand{\X}{\mathsf{X}}
\newcommand{\tX}{\widetilde{\mathsf{X}}}
\newcommand{\tY}{\widetilde{\mathsf{Y}}}
\newcommand{\U}{\mathsf{U}}
\newcommand{\bL}{\mathsf{L}}
\newcommand{\bLt}{\widetilde{\mathsf{L}}}
\newcommand{\HolF}{\mathsf{Hol}(\cF)}
\newcommand{\B}{\mathsf{B}}
\newcommand{\G}{\mathsf{G}}
\newcommand{\N}{\mathsf{N}}
\newcommand{\tx}{\widetilde{x}}
\newcommand{\ty}{\widetilde{y}}
\newcommand{\tgamma}{\widetilde\gamma}
\newcommand{\Aut}[1]{\mathsf{Aut}\rbrac{#1}}
\newcommand{\Autom}[2]{\mathsf{Aut}_{#1}(#2)}
\DeclareMathOperator{\rank}{rank}
\DeclareMathOperator{\img}{img}
\DeclareMathOperator{\hol}{hol}
\newcommand{\Ad}{\mathrm{Ad}}
\newcommand{\pis}{\pi^\sharp}
\newcommand{\piL}{\pi_\bL}
\newcommand{\omL}{\eta}
\newcommand{\piF}{\pi_\cF}
\newcommand{\piG}{\pi_\cG}
\newcommand{\omF}{\omX_\cF}
\newcommand{\omG}{\omX_\cG}
\newcommand{\piX}{\pi}
\newcommand{\piXs}{\piX^\sharp}
\newcommand{\omX}{\sigma}
\newcommand{\omXf}{\omX^\flat}
\newcommand{\omXo}{\omX_0}
\newcommand{\omXof}{\omX_0^\flat}
\newcommand{\cT}[1]{\mathcal{T}_{#1}} 
\newcommand{\Tpol}[2][\bullet]{\wedge^{#1} \cT{#2}} 
\newcommand{\forms}[2][\bullet]{\Omega^{#1}_{#2}} 
\newcommand{\cF}{\mathcal{F}}
\newcommand{\cG}{\mathcal{G}}
\newcommand{\cO}[1]{\mathcal{O}_{#1}} 
\newcommand{\coH}[2][\bullet]{\mathsf{H}^{#1}(#2)}
\newcommand{\End}[1]{\mathsf{End}(#1)}
\newcommand{\sEnd}[1]{\mathcal{E}nd(#1)}
\newcommand{\dd}{\mathrm{d}}
\newcommand{\hook}[1]{\iota_{#1}}
\newcommand{\BBW}{
Let $(\X,\pi)$ be a compact K\"ahler Poisson manifold, and suppose that $\bL \subset \X$ is a compact symplectic leaf whose fundamental group is finite.  Then there exist a compact K\"ahler Poisson manifold $\Y$, and a finite \'etale Poisson morphism $\bLt \times \Y \to \X$, where $\bLt$ is the universal cover of $\bL$.}
\begin{document}

\title{A global Weinstein splitting theorem for holomorphic Poisson manifolds}
\author{St\'ephane Druel\thanks{CNRS/Universit\'e Claude Bernard Lyon 1, \email{stephane.druel@math.cnrs.fr}} \and Jorge Vit\'orio Pereira\thanks{IMPA, \email{jvp@impa.br}} \and Brent Pym\thanks{McGill University, \email{brent.pym@mcgill.ca}} \and Fr\'ed\'eric Touzet\thanks{Universit\'e Rennes 1, \email{frederic.touzet@univ-rennes1.fr}}}
\maketitle

\begin{abstract}
We prove that if a compact K\"ahler Poisson manifold has a symplectic leaf with finite fundamental group, then after passing to a finite \'etale cover, it decomposes as the product of the universal cover of the leaf and some other Poisson manifold.  As a step in the proof, we establish a special case of Beauville's conjecture on the structure of compact K\"ahler manifolds with split tangent bundle. 
\end{abstract}

\section{Introduction}

In 1983, Weinstein~\cite{Weinstein1983} proved a fundamental fact about Poisson brackets, nowadays known as his ``splitting theorem'': if $p$ is a point in a Poisson manifold $\X$ at which the matrix of the Poisson bracket has rank $2k$, then $p$ has a neighbourhood that decomposes as a product of a symplectic manifold of dimension $2k$, and a Poisson manifold for which the Poisson bracket vanishes at $p$.   An important consequence of this splitting is that $\X$ admits a canonical (and possibly singular) foliation by symplectic leaves, which is locally induced by the aforementioned product decomposition.

It is natural to ask under which conditions this splitting is \emph{global}, so that $\X$ decomposes as a product of Poisson manifolds, having a symplectic leaf as a factor (perhaps after passing to a suitable covering space). If $\X$ is compact, an obvious necessary condition is that the symplectic leaf is also compact.  However, this condition is not sufficient; it is easy to construct examples in the $C^\infty$ context where both $\X$ and its leaf are simply connected, but $\X$ does not decompose as a product.

In contrast, we will show that for holomorphic Poisson structures on compact K\"ahler manifolds, the existence of such splittings is quite a general phenomenon:
\begin{theorem}\label{thm:BBW-decomp}
\BBW
\end{theorem}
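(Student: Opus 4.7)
The overall strategy is to reduce the global Weinstein splitting question to a statement about holomorphic splittings of the tangent bundle of $\X$, and then invoke a special case of Beauville's conjecture on compact K\"ahler manifolds with decomposable tangent bundles. Concretely, I would aim to construct a pair of regular holomorphic foliations $\cF,\cG$ on $\X$ (perhaps after passing to a finite \'etale cover), satisfying $\cT{\X} = \cT{\cF} \oplus \cT{\cG}$, with $\cF$ having $\bL$ as one of its compact leaves and $\cG$ meeting $\bL$ transversally at a single point.

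First I would extract the local picture from Weinstein's theorem. In a tubular neighborhood $U$ of $\bL$, the holomorphic Weinstein splitting identifies $(U,\pi)$ Poisson-isomorphically with a product of $\bL$ and a transverse Poisson slice $N$ whose Poisson tensor vanishes at the distinguished point. The two factor foliations of this local product, namely copies of $\bL$ over points of $N$ and copies of $N$ over points of $\bL$, supply the desired splitting on $U$. Because $\pi_1(\bL)$ is finite and $\bL$ is compact, a holomorphic version of local Reeb stability shows that all leaves of the ``$\bL$-factor'' foliation sufficiently close to $\bL$ are compact and covered by $\bLt$.

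The hard part, which I expect to be the main obstacle, is propagating this structure from $U$ to all of $\X$; the analogous statement fails in the smooth category, so the K\"ahler hypothesis must be used essentially. One plausible path is to show that the nearby compact leaves from the previous step form a family parameterized by a component of the Douady/Barlet space of $\X$; K\"ahler positivity together with the properness of this family should force it to be complete, foliating all of $\X$ by compact leaves each \'etale-covered by $\bLt$. The complementary foliation $\cG$ is then produced locally as the graphs of transverse Weinstein slices, and these slices should glue globally up to a finite \'etale cover because the holonomy of $\cF$ around loops in a leaf is a Poisson isomorphism of the transverse slice, hence finite-order when $\pi_1(\bL)$ is finite.

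With the splitting $\cT{\X} = \cT{\cF} \oplus \cT{\cG}$ in hand, I would appeal to the special case of Beauville's conjecture established separately in the paper: a compact K\"ahler manifold whose tangent bundle splits as a sum of two involutive subbundles, one of them tangent to a foliation with a compact leaf of finite fundamental group, admits a finite \'etale cover that splits as a product $\bLt \times \Y$ whose tangent decomposition pulls back to $\cT{\cF} \oplus \cT{\cG}$. By uniqueness of the Weinstein splitting together with analytic continuation, the Poisson structure on the cover is the product of the symplectic structure on $\bLt$ with a Poisson structure on $\Y$, yielding the required finite \'etale Poisson morphism $\bLt \times \Y \to \X$.
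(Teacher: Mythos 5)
Your high-level strategy --- produce a splitting $\cT{\X} = \cF \oplus \cG$ into involutive subbundles with $\bL$ a compact leaf of $\cF$, and then invoke the special case of Beauville's conjecture --- is exactly the architecture of the paper's proof, and the final appeal to that splitting theorem is correct. However, there is a genuine gap at the heart of your argument: you never actually construct the global splitting. The local Weinstein neighbourhood of $\bL$ and local Reeb stability only give the two foliations on a tubular neighbourhood of $\bL$, and the proposed propagation via the Douady/Barlet space does not work as stated. The members of the Douady component through $[\bL]$ are deformations of $\bL$ as a complex subspace; there is no reason they remain symplectic leaves (indeed they should not: in the target product $\bLt \times \Y$ the leaves of $\cF$ are the slices $\bLt\times\{y\}$, which are symplectic leaves only where the Poisson structure of $\Y$ vanishes, so the foliation you need is not the symplectic foliation of $\pi$, which is in general singular). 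Nor is there an argument that the family stays smooth, sweeps out all of $\X$, or that its tangent spaces assemble into a holomorphic subbundle. Likewise, the transverse Weinstein slices in different charts have no reason to glue into a single global involutive distribution $\cG$: finiteness of $\pi_1(\bL)$ controls holonomy around loops in $\bL$ but says nothing about the compatibility of transversals at points far from $\bL$.

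The paper fills this gap with a different, genuinely global mechanism: Hodge theory. One first shows, by contracting powers of $\pi$ into a K\"ahler class and using Hodge symmetry, that the symplectic form $\eta$ on $\bL$ extends to a global closed holomorphic two-form $\sigma$ on $\X$, and then corrects $\sigma$ so that $\theta = \piXs\sigma^\flat$ is idempotent --- a \emph{subcalibration}. The splitting is then simply $\cF = \img\theta$ and $\cG = \ker\theta$, defined everywhere on $\X$ at once, and the Frejlich--M\u{a}rcu\cb{t} theorem shows both summands are involutive because every holomorphic two-form on a compact K\"ahler manifold is closed; this is where the K\"ahler hypothesis enters essentially, and it is precisely the step your proposal leaves open. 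A smaller point: your identification of the Poisson structure on the cover as a product by ``uniqueness of the Weinstein splitting and analytic continuation'' is vaguer than needed; on the compact product $\bLt\times\Y$ the K\"unneth decomposition of $\coH[0]{\bLt\times\Y,\wedge^2\cT{\bLt\times\Y}}$ together with the $\pi$-orthogonality of the two factors gives this immediately.
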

Here and throughout, by a ``K\"ahler Poisson manifold'', we mean a pair $(\X,\pi)$ where $\X$ is a complex manifold that admits a K\"ahler metric, and $\pi \in \coH[0]{\X,\wedge^2 \cT{\X}}$ is a holomorphic bivector that is Poisson, i.e.~the Schouten bracket $[\pi,\pi]=0$.  We will not make reference to any specific choice of K\"ahler metric at any point in the paper.

We remark that all hypotheses of \autoref{thm:BBW-decomp} are used in an essential way in the proof.  Indeed, in \autoref{sec:BBW-decomp}, we give examples showing that the conclusion may fail if the Poisson manifold $\X$ is non-K\"ahler ($C^\infty$ or complex analytic), if $\X$ is not compact, or if the fundamental group of $\bL$ is infinite.

The proof of \autoref{thm:BBW-decomp} is given in \autoref{sec:BBW-decomp}.  It rests on the following result of independent interest, which we establish using Hodge theory in \autoref{sec:subcal}.

\begin{theorem}\label{thm:subcal}
Let $(\X,\piX)$ be compact K\"ahler Poisson manifold, and suppose that $i : \bL\hookrightarrow \X$ is the inclusion of a compact symplectic leaf.  Then the holomorphic symplectic form on $\bL$ extends to a global closed holomorphic two-form $\omX \in \coH[0]{\X,\forms[2]{\X}}$ of constant rank such that the composition $\theta := \piXs\omXf \in \sEnd{\cT{\X}}$ is idempotent, i.e.~$\theta^2 =\theta$.
\end{theorem}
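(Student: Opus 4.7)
The plan is to construct $\omX$ via Hodge theory and then verify the idempotency and constant-rank conditions. Closedness of $\omX$ is automatic: on the compact K\"ahler manifold $\X$ every holomorphic form is harmonic and hence $d$-closed, by degeneration of the Fr\"olicher spectral sequence. Moreover, $\bL$ inherits a K\"ahler structure as a compact complex submanifold of $\X$, so the same applies there, and $\sigma_\bL$ is harmonic. The restriction map $i^*: \coH[0]{\X,\forms[2]{\X}} \to \coH[0]{\bL,\forms[2]{\bL}}$ is then identified with the $(2,0)$-part of the Hodge-structure morphism $i^*: \coH[2]{\X,\CC} \to \coH[2]{\bL,\CC}$.

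For the extension itself, producing $\omX$ with $i^*\omX = \sigma_\bL$ reduces cohomologically to exhibiting $[\sigma_\bL]$ in the image of $i^*$ at the $(2,0)$-level: any holomorphic preimage then restricts to a holomorphic 2-form on $\bL$ cohomologous to $\sigma_\bL$, and Hodge rigidity on the compact K\"ahler $\bL$ --- cohomologous holomorphic forms coincide, as their difference is harmonic and exact --- upgrades this to equality of forms. The Poisson structure $\piX$ must enter decisively here; a generic compact K\"ahler submanifold admits no such lift. My expected route is to combine the local Weinstein splitting near $\bL$ (which models $\piX$ pointwise as a direct sum of the leafwise symplectic structure and a transverse Poisson tensor, so that $p_1^*\sigma_\bL$ is a preferred smooth extension in the normal form) with the Hodge projection onto the $(2,0)$-harmonic part, producing a global class whose $(2,0)$ representative is $\omX$.

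The core of the proof, and the main obstacle, is verifying the idempotency $\theta^2 = \theta$ globally. On $\bL$ itself it is immediate: since $\piXs|_\bL$ has image $\cT{\bL} \subseteq \cT{\X}|_\bL$, and since $\piX|_\bL$ is Poisson-inverse to $\sigma_\bL$ on the leaf, one reads off that $\theta|_\bL$ is the projection onto $\cT{\bL}$ along a complement. Globally, however, the holomorphic endomorphism $\theta^2 - \theta \in \coH[0]{\X, \sEnd{\cT{\X}}}$ merely vanishes on $\bL$, which does not in itself force global vanishing of a holomorphic section. The required rigidity must be extracted by coupling the Poisson condition $[\piX,\piX]=0$ with the closedness of $\omX$ and the K\"ahler hypothesis --- a plausible route is to compute iterated transverse derivatives of $\theta^2 - \theta$ along $\bL$ using the Lichnerowicz--Poisson differential $[\piX,\cdot]$ to show that all jets vanish, and then invoke a Hodge-theoretic unique-continuation or vanishing argument to conclude $\theta^2 - \theta \equiv 0$ on $\X$.

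Finally, constant rank follows from idempotency. Once $\theta^2 = \theta$, its pointwise rank equals $\mathrm{tr}(\theta)$, a holomorphic function on the compact manifold $\X$, hence constant and equal to $\dim \bL = 2k$. For $\omX$ itself, the algebraic identities implied by $\theta^2 = \theta$ should force $\omXf$ to factor through the rank-$2k$ subbundle $\img \theta$, giving $\rank \omX = \rank \theta = 2k$ everywhere.
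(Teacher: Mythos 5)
There are two genuine gaps here, one in each of the two main steps, and the second is fatal to the strategy as proposed.

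\emph{The extension step.} Your proposed mechanism --- take the preferred local extension $p_1^*\omL$ furnished by the Weinstein normal form near $\bL$ and apply ``the Hodge projection onto the $(2,0)$-harmonic part'' --- does not produce anything: the Weinstein model exists only on a tubular neighbourhood of $\bL$, so there is no globally defined form on $\X$ to which a Hodge projection could be applied, and no class in $\coH[2]{\X,\CC}$ is actually constructed. You correctly identify that the whole difficulty is to show $[\omL]$ lies in the image of $i^*$ on $(2,0)$-classes, but you do not supply the argument. The paper's route (\autoref{lem:extend}) is different and genuinely global: since $\piX$ is holomorphic, $\hook{\piX}$ acts on Dolbeault cohomology, and one forms $\alpha = \hook{\piX}^k\omega^{2k} \in \coH[2k]{\X,\cO{\X}}$ from a K\"ahler class $\omega$; one checks $i^*\alpha \neq 0$ because $\hook{\piL}^k$ is an isomorphism (as $\piL^k$ trivializes the anticanonical bundle of $\bL$) and $i^*\omega$ is K\"ahler on $\bL$. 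Hodge symmetry converts $\overline{\alpha}$ into a holomorphic $2k$-form $\mu$ with $i^*\mu$ a nonzero multiple of $\omL^k$, and $\omXo := \tfrac{1}{(k-1)!}\hook{\piX}^{k-1}\mu$, suitably normalized, is the desired closed extension. (Your remark that cohomologous holomorphic forms on the compact K\"ahler $\bL$ must coincide is correct, but it is not the missing ingredient.)

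\emph{The idempotency step.} You attempt to prove that the extension produced in the first step already satisfies $\theta^2=\theta$, by showing that all transverse jets of $\theta^2-\theta$ vanish along $\bL$ and then invoking unique continuation. This cannot succeed, because the statement you are trying to prove is false: an arbitrary closed holomorphic extension of $\omL$ gives an endomorphism $\theta_0$ that is idempotent only after restriction to $\bL$, and there is no reason whatsoever for its higher jets along $\bL$ to vanish. The missing idea is that the extension must be \emph{corrected}, which is the content of \autoref{lem:subcal-correct}: the coefficients of the characteristic polynomial of $\theta_0=\piXs\omXof$ are global holomorphic functions on the compact connected $\X$, hence constant, so the characteristic polynomial is $t^{n-2k}(t-1)^{2k}$ everywhere; the generalized eigenspace decomposition splits $\cT{\X}=\cF\oplus\cG$ with $i^*\cF=\cT{\bL}$; after projecting $\omXo$ to $\wedge^2\cF^\vee$, the identity $\theta_1\piXs=\piXs\theta_1^\vee$ forces $\piX$ to be block diagonal with nondegenerate block $\piX'$ on the unit-eigenvalue summand, and one finally sets $\omX:=(\piX')^{-1}$, which is idempotent, of constant rank $2k$ by construction, and closed because every holomorphic two-form on a compact K\"ahler manifold is closed. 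Note also that your closing deduction of the constant rank of $\omX$ from $\theta^2=\theta$ alone is not airtight, since $\ker\theta$ could a priori be strictly larger than $\ker\omXf$; in the paper the constant rank of $\omX$ is built into its definition as the inverse of $\piX'$.
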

Here  $\piXs : \forms[1]{\X} \to \cT{\X}$ and $\omXf : \cT{\X} \to \forms[1]{\X}$ are the usual maps defined by interior contraction into the corresponding bilinear forms.  Following work of Frejlich--M\u{a}rcu\cb{t} in the $C^\infty$ setting, we refer to a holomorphic two-form $\sigma$ as in \autoref{thm:subcal} as a \defn{subcalibration of $\piX$}.  The key point about a subcalibration, which they observed, is that it provides a splitting $\cT{\X} = \cF \oplus \cG$ into a pair of subbundles $\cF = \img \theta$ and $\cG = \ker \theta$ that are orthogonal with respect to $\piX$.  Moreover $\cF$ is automatically involutive, and $\cG$ is involutive if and only if the component of $\omX$ lying in $\wedge^2\cF^\vee$ is closed.  But in the compact K\"ahler setting, the latter condition is automatic by Hodge theory, and furthermore, any splitting of the tangent bundle into involutive subbundles is expected to arise from a splitting of some covering space of $\X$, according to the following open conjecture of Beauville (2000):

\begin{conjecture}[{\cite[(2.3)]{Beauville2000}}]\label{conj:beauville}
Let $\X$ be a compact K\"ahler manifold equipped with a holomorphic decomposition $\cT{\X} = \bigoplus_{i\in I} \cF_i$ of the tangent bundle such that each subbundle $\bigoplus_{j\in J} \cF_j$, for $J \subset I$, is involutive.  Then the universal cover of $\X$ is isomorphic to a product $\prod_{i \in I} \U_i$ in such a way that the given decomposition of $\cT{\X}$ corresponds to the natural decomposition $\cT{\prod_{i \in I}\U_i} \cong \bigoplus_{i \in I} \cT{\U_i}$.
\end{conjecture}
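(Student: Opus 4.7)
The plan is to pass to the universal cover $q\colon \tX \to \X$, on which the pulled-back splitting $q^*\cT{\X} = \bigoplus_i q^*\cF_i$ is still involutive in every sub-sum, and to argue that simple-connectedness together with the Hodge-theoretic rigidity of compact K\"ahler manifolds forces this to arise from a product decomposition of $\tX$ itself. By Frobenius, each summand integrates to a regular holomorphic foliation $\tilde{\cF}_i$ on $\tX$ with pairwise transverse leaves. It then suffices to produce, for each $i \in I$, a holomorphic submersion $f_i\colon \tX \to \U_i$ whose fibers are the leaves of $\bigoplus_{j \neq i} q^*\cF_j$, because the tuple $(f_i)_{i \in I}\colon \tX \to \prod_i \U_i$ is automatically a local biholomorphism, and simple-connectedness plus the local product structure will promote it to a global biholomorphism intertwining the splittings.

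The natural route is to exploit the splitting to produce flat Bott connections: for each ordered pair $i \neq j$ the involutivity of $\cF_i \oplus \cF_j$ lets one define a holomorphic partial connection on $\cF_j$ in the $\cF_i$-directions by projecting the Lie bracket, and involutivity of all the further sub-sums shows that these partial connections are flat and mutually compatible as $i$ varies. On compact K\"ahler $\X$, one hopes to combine this with harmonic theory --- in the spirit of Beauville's treatment of the case when some $\cF_i$ has rank one, where the induced closed holomorphic one-forms on $\X$ trivialize $\cF_i^\vee$ --- to integrate these partial connections to honest first integrals on $\tX$. Vanishing of the relevant Atiyah-type classes, coupled with the $\partial\bar\partial$-lemma, should provide enough global holomorphic data on $\tX$ to define the candidate submersions $f_i$ from parallel sections of the transverse bundles.

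The main obstacle, and the reason \autoref{conj:beauville} has remained open, is the passage from the formal existence of flat partial connections to genuine global submersions on the non-compact manifold $\tX$. Even with simple-connectedness killing the monodromy obstruction, one must rule out pathologies of the leaves of each $\tilde{\cF}_i$ on $\tX$ --- dense leaves, non-Hausdorff leaf spaces, or leaves whose closures strictly contain them --- and none of these is automatically excluded by the K\"ahler hypothesis once one has left the compact manifold $\X$. This is essentially an Ehresmann-type completeness requirement, and all partial results in the literature establish it under auxiliary hypotheses: when some $\cF_i$ has rank one (Beauville), when a factor is algebraically integrable or uniruled, or when extra geometric structure rigidifies the leaves. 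I would therefore not attempt to prove the conjecture in full generality, but rather the special case needed for \autoref{thm:BBW-decomp}: there the subcalibration produced by \autoref{thm:subcal} pins down one factor as coming from a compact symplectic leaf, and one can exploit the symplectic form together with the compactness of $\bL$ to secure completeness of the transverse foliation directly, circumventing the general obstacle.
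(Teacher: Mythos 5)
You have correctly identified that the statement is an open conjecture: the paper does not prove \autoref{conj:beauville}, it only records it as motivation, and there is accordingly no proof in the paper to compare against. Your diagnosis of the central obstruction --- that the flat partial (Bott) connections coming from involutivity of the sub-sums do not by themselves yield complete Ehresmann connections, so that the leaves of the lifted foliations on the non-compact universal cover may a priori be dense, have non-Hausdorff leaf space, and so on --- is accurate, and declining to prove the general statement is the right call.

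Where your sketch diverges from what the paper actually establishes is in the special case. The paper's partial result (\autoref{thm:Ehresmann} and the corollary following it) concerns a two-factor splitting $\cT{\X}=\cF\oplus\cG$ in which $\cF$ has a compact leaf with finite holonomy, and its proof does not use a symplectic form at all. It runs as follows: (i) the global Reeb stability theorem of Pereira forces \emph{every} leaf of $\cF$ to be compact with finite holonomy, so the source and target maps of the holonomy groupoid $\HolF$ are proper; (ii) $t^{-1}\cG$ is then a complete flat Ehresmann connection on $s:\HolF\to\X$ whose monodromy preserves a K\"ahler class on the fibre; (iii) Lieberman's structure theory of the group $\Autom{\omega}{\Z}$ (via \autoref{lem:finite-index}) produces a finite-index subgroup of $\pi_1(\X)$ whose image contains no nontrivial finite subgroup acting freely on the fibre, and passing to the corresponding finite \'etale cover kills the leaf holonomy and exhibits the pair as a suspension. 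The completeness you propose to ``secure directly from the symplectic form'' is in fact secured by properness of the holonomy groupoid, which rests on Reeb stability rather than on any Poisson-geometric input; and the genuinely delicate step --- trivializing the resulting suspension, i.e.\ controlling the monodromy representation into $\Aut{\bL}$ --- requires Lieberman's theorem together with the vanishing of holomorphic vector fields on the universal cover of the leaf. Compactness of $\bL$ alone does not suffice for that step. If you intend to supply a proof of the special case needed for \autoref{thm:BBW-decomp}, this is the machinery you would need to reproduce.
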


Thus, our second key step in the proof of \autoref{thm:BBW-decomp} is to prove the following theorem, which establishes a special case of \autoref{conj:beauville}, but with a stronger conclusion:
\begin{theorem}\label{thm:split}
Suppose that $\X$ is a compact K\"ahler manifold equipped with a splitting $\cT{\X} = \cF \oplus \cG$ of the tangent bundle into involutive subbundles.  If $\cF$ has a compact leaf $\bL$ with finite holonomy group, then the splitting of $\cT{\X}$ is induced by a splitting of the universal cover of $\X$ as a product of manifolds.  If, in addition, $\bL$ has finite fundamental group and trivial canonical class, then the splitting of the universal cover is induced by a splitting of a finite \'etale cover.
\end{theorem}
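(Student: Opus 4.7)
The plan is to prove the theorem in three stages: establish global stability of the foliation defined by $\cF$, use $\cG$ as a flat Ehresmann connection to split the universal cover, and then exploit the additional hypotheses on $\bL$ to finitely truncate the resulting holonomy.

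In the first stage, I would show that \emph{every} leaf of $\cF$ is compact with finite holonomy. Reeb's local stability theorem, applied to $\bL$, shows that the set $\Omega \subset \X$ of points lying on a compact $\cF$-leaf with finite holonomy is open, and that each such leaf sits in a saturated neighborhood admitting a local orbifold fibration structure. To prove $\Omega$ is closed, the K\"ahler hypothesis is essential: on a compact K\"ahler manifold, compact leaves of a holomorphic distribution have volumes controlled by their cohomology classes, so by Bishop--Lieberman compactness of the Barlet space of cycles with bounded volume, a limit of compact leaves is again a compact analytic cycle of dimension $\rank \cF$. A short continuity argument then shows this limit contains the $\cF$-leaf through the limit point and that this leaf inherits finite holonomy. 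Connectedness of $\X$ gives $\Omega = \X$.

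The second stage uses the resulting global holomorphic orbifold fibration $\pi : \X \to \mathcal{B}$ onto the leaf space, whose fibers are modeled on $\bL$. Because $\cG$ is involutive and transverse to $\cF$, it defines a flat holomorphic Ehresmann connection on $\pi$, and parallel transport yields a holonomy representation
\[
\rho : \pi_1^{\mathrm{orb}}(\mathcal{B}) \longrightarrow \Aut{\bLt},
\]
where $\bLt$ denotes the universal cover of $\bL$. Passing to the universal cover $\tX$ of $\X$ trivializes $\rho$, so parallel transport gives a holomorphic identification $\tX \cong \bLt \times \widetilde{\mathcal{B}}$ in which the lifts of $\cF$ and $\cG$ become the two product foliations. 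This establishes the first assertion.

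For the third stage, assume $\pi_1(\bL)$ is finite and $c_1(\bL) = 0$. Then $\bLt$ is a simply connected compact K\"ahler manifold with trivial canonical class, so by the Beauville--Bogomolov decomposition it is a product of strict Calabi--Yau and irreducible holomorphic symplectic factors; in particular $\coH[0]{\bLt, \cT{\bLt}} = 0$, so $\Aut{\bLt}$ is a discrete complex Lie group. Moreover, parallel transport along $\cG$ preserves the cohomology class of the restriction to fibers of a fixed K\"ahler class on $\X$, essentially because that class is horizontal for the flat connection. By Lieberman's theorem, the group of biholomorphisms of $\bLt$ preserving a K\"ahler class is finite modulo its identity component, and discreteness then forces $\rho$ to have finite image. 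Combined with the finiteness of the deck action $\pi_1(\bL) \hookrightarrow \Aut{\bLt}$, this implies that a finite-index subgroup of $\pi_1(\X)$ acts on $\tX \cong \bLt \times \widetilde{\mathcal{B}}$ respecting the product structure and trivially on the first factor; the corresponding finite \'etale cover of $\X$ is therefore isomorphic to $\bLt \times \Y$ for some compact K\"ahler Poisson manifold $\Y$.

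The main obstacle is the global stability argument in the first stage: local Reeb stability is classical, but upgrading it to every leaf crucially requires the K\"ahler hypothesis, and care is needed in the Barlet-space convergence to rule out the possibility that a sequence of smooth compact leaves degenerates to a reducible or lower-dimensional cycle. Once this is achieved, the remaining holonomy arguments follow a classical pattern and reduce to standard inputs: Beauville--Bogomolov, vanishing of $\coH[0]{\bLt,\cT{\bLt}}$ under finite $\pi_1$ and trivial $c_1$, and Lieberman's finiteness theorem.
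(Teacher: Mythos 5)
Your overall strategy---global Reeb stability, then a flat Ehresmann connection on the ``fibration'' by leaves, then Lieberman-type finiteness---is the same as the paper's, and your first and third stages essentially reproduce what is there: stage 1 is Pereira's global stability theorem, which the paper simply cites, and stage 3 is the argument of \autoref{thm:Ehresmann}, part \ref{stmt:split} and \autoref{cor:split-CY-leaf} (with Beauville--Bogomolov standing in for a short Hodge-theoretic computation of $h^0(\bLt,\cT{\bLt})=0$). The genuine gap is in your stage 2. The leaf space $\mathcal{B}$ is only an orbifold, and the sentence ``passing to the universal cover trivializes $\rho$, so parallel transport gives $\tX\cong\bLt\times\widetilde{\mathcal{B}}$'' asserts precisely what has to be proved: a priori the leaves of the pulled-back foliation on a covering space can retain non-trivial holonomy (the holonomy of such a leaf is the image of $\ker(\pi_1(\bL)\to\pi_1(\X))$ in the holonomy group of $\bL$), the map $\X\to\mathcal{B}$ is not a locally trivial bundle near a leaf with non-trivial holonomy, so ``parallel transport of the fibre'' is not yet defined there, and it is not clear that $\widetilde{\mathcal{B}}$ is a manifold rather than a bad orbifold. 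Tellingly, your stage 2 never uses the K\"ahler hypothesis, whereas the paper's proof of the suspension structure depends on it in an essential way: it replaces the orbifold fibration by the honest \emph{proper} fibre bundle $s:\HolF\to\X$ whose fibres are the holonomy covers of the leaves, notes that the monodromy of $t^{-1}\cG$ preserves a K\"ahler class on $s^{-1}(x)$, and then uses Lieberman's exact sequence $1\to\N\to\Aut{\Z}_0\to\T\to 1$ together with the fixed-point property of $\N$ (\autoref{lem:finite-index} and \autoref{lem:fixed-points}) to produce a finite-index subgroup of $\pi_1(\X,x)$ whose monodromy image contains no non-trivial finite subgroup acting freely. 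Only on the corresponding finite \'etale cover does the leaf holonomy die, making $\cF$ a proper fibration and $(\cF,\cG)$ a suspension; the splitting of the universal cover is then read off from the suspension.

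Two further points. First, even for the universal-cover assertion alone, where one can check that leaf holonomy vanishes for free (since $\pi_1(\tX)=1$), you still must show that a regular holomorphic foliation with trivial leaf holonomy and a complete complementary foliation is a locally trivial fibration over a \emph{manifold}; this is where the holonomy groupoid (or an equivalent device) does real work, and your proposal supplies no argument for it. Second, for the finite \'etale cover in the last assertion, your claim that ``a finite-index subgroup of $\pi_1(\X)$ acts on $\tX\cong\bLt\times\widetilde{\mathcal{B}}$ respecting the product structure'' presupposes the very product structure whose construction is the content of stage 2; and the monodromy naturally acts on the holonomy cover of $\bL$ (the fibre $s^{-1}(x)$) as a representation of $\pi_1(\X,x)$, not on $\bLt$ as a representation of $\pi_1^{\mathrm{orb}}(\mathcal{B})$---these can be reconciled, but only after the suspension structure is already in place.
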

The proof of \autoref{thm:split} is given in \autoref{sec:beauville}; see in particular \autoref{thm:Ehresmann} and \autoref{cor:split-CY-leaf}.  It exploits the second-named author's global Reeb stability theorem for holomorphic foliations on compact K\"ahler manifolds~\cite{Pereira01}, which implies that all leaves of $\cF$ are compact with finite holonomy.  We then apply the theory of holonomy groupoids to show that the holonomy covers of the leaves assemble into a bundle of K\"ahler manifolds over $\X$, equipped with a flat Ehresmann connection induced by the transverse foliation $\cG$.  Finally, we exploit Lieberman's structure theory for automorphism groups of compact K\"ahler manifolds~\cite{Lieberman78} to analyze the monodromy action of the fundamental group of $\X$ on the fibres, and deduce the result.

The results above have  several interesting consequences for the global structure of compact K\"ahler Poisson manifolds.  For instance \autoref{thm:BBW-decomp} immediately implies the following statement.

\begin{corollary}\label{cor:iso}
If $(\X,\pi)$ is a compact K\"ahler Poisson manifold, then all simply connected compact symplectic leaves in $(\X,\pi)$ are isomorphic.
\end{corollary}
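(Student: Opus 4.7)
The plan is to derive the corollary as an immediate consequence of \autoref{thm:BBW-decomp}. Let $\bL_1$ and $\bL_2$ be two simply connected compact symplectic leaves. Applying \autoref{thm:BBW-decomp} to $\bL_1$ (whose fundamental group is trivial, hence finite), we obtain a finite \'etale Poisson morphism $\phi\colon \bL_1 \times \Y \to \X$, where we have used $\bLt = \bL_1$. Since $\phi$ is \'etale with compact source and $\X$ is connected, the image of $\phi$ is both open and closed, so $\phi$ is surjective; in particular, $\bL_2$ lies in the image.

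Next, I analyze the preimage $\phi^{-1}(\bL_2)$. The symplectic leaves of the product Poisson manifold $\bL_1 \times \Y$ are precisely the products $\bL_1 \times \bL_\Y$, where $\bL_\Y$ ranges over the symplectic leaves of $\Y$. Because $\phi$ is a Poisson morphism that is also \'etale, it maps each leaf of $\bL_1 \times \Y$ submersively onto a leaf of $\X$ of the same dimension. Hence every connected component of $\phi^{-1}(\bL_2)$ is a leaf $\bL_1 \times \bL_\Y$ with $\dim(\bL_1\times\bL_\Y)=\dim\bL_2=\dim\bL_1$, forcing $\bL_\Y = \{y\}$ for some zero $y$ of the Poisson tensor on $\Y$. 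Therefore $\phi^{-1}(\bL_2)$ is a disjoint union of copies $\bL_1 \times \{y_i\}$.

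Fix one such component. Since covering maps are stable under base change, the restriction $\phi\colon \bL_1 \times \{y_i\} \to \bL_2$ is a finite covering map between connected complex manifolds. As $\bL_2$ is simply connected, this covering is trivial of degree one, so the restriction is a biholomorphism. Moreover, the product Poisson structure on $\bL_1 \times \{y_i\}$ coincides with the symplectic structure on $\bL_1$, so the fact that the restriction is a Poisson morphism between symplectic manifolds of the same dimension means it is automatically a symplectomorphism. Hence $\bL_1 \cong \bL_2$ as holomorphic symplectic manifolds.

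There is no real obstacle here, since all the heavy lifting has already been done in \autoref{thm:BBW-decomp}; the only point requiring any care is the identification of $\phi^{-1}(\bL_2)$ via the product structure of the symplectic foliation on $\bL_1\times\Y$, together with the observation that a Poisson \'etale map between equidimensional symplectic manifolds is symplectic.
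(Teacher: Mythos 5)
Your overall strategy is the right one and is exactly the route the paper intends (the paper offers no separate proof, deriving the corollary directly from \autoref{thm:BBW-decomp}), and most of your steps are carried out correctly: the surjectivity of $\phi$, the identification of the symplectic leaves of the product as $\bL_1\times\bL_\Y$, the fact that $\phi^{-1}(\bL_2)$ is a union of such leaves each covering $\bL_2$, the triviality of a connected covering of the simply connected $\bL_2$, and the observation that an \'etale Poisson map between equidimensional symplectic manifolds is a symplectomorphism. However, there is one genuine gap: in the chain $\dim(\bL_1\times\bL_\Y)=\dim\bL_2=\dim\bL_1$, the second equality is asserted without justification, and it is precisely the point where something must be proved. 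Two simply connected compact symplectic leaves need not a priori have the same dimension --- a single point at which $\pi$ vanishes is itself such a leaf --- so as written your argument only yields $\bL_2\cong\bL_1\times\bL_\Y$ for some compact symplectic leaf $\bL_\Y$ of $\Y$, which is strictly weaker than the corollary.

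The repair is short but does require an additional idea: apply \autoref{thm:BBW-decomp} a second time, to $\bL_2$. Your own computation already gives $\dim\bL_2=\dim\bL_1+\dim\bL_\Y\geq\dim\bL_1$; equivalently, since $\phi$ is a surjective \'etale Poisson map and the product Poisson structure on $\bL_1\times\Y$ has rank at least $\dim\bL_1$ everywhere, the rank of $\pi$ is at least $\dim\bL_1$ at every point of $\X$. Running the identical argument with the roles of $\bL_1$ and $\bL_2$ exchanged, using the finite \'etale Poisson morphism $\bL_2\times\Y'\to\X$ furnished by the theorem applied to $\bL_2$, yields $\dim\bL_1\geq\dim\bL_2$. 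Hence $\dim\bL_1=\dim\bL_2$, which forces $\dim\bL_\Y=0$, and the remainder of your proof goes through unchanged.
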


Meanwhile, we have the following immediate corollaries of \autoref{thm:subcal}:
\begin{corollary}\label{cor:irred}
Let $(\X,\piX)$ be a compact connected K\"ahler Poisson manifold such that the tangent bundle $\cT{\X}$ is irreducible. If $\bL \subset \X$ is a compact symplectic leaf, then either $\bL =\X$ or $\bL$ is a single point.
\end{corollary}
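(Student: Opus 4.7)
My plan is to apply \autoref{thm:subcal} to $\bL$, extract the resulting direct-sum splitting $\cT{\X} = \cF\oplus\cG$, and then use the irreducibility hypothesis to collapse this splitting to a trivial one, thereby forcing $\bL$ to be either a point or all of $\X$.

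First I would invoke \autoref{thm:subcal} to extend the symplectic form of $\bL$ to a closed holomorphic two-form $\omX$ on $\X$ of constant rank such that $\theta = \piXs\omXf$ is idempotent. As explained in the paragraph following that theorem, this produces the holomorphic decomposition $\cT{\X} = \cF\oplus\cG$ with $\cF = \img\theta$ and $\cG=\ker\theta$.

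The main computational step is to identify $\rank\cF$ with $\dim\bL$. At any point $p\in\bL$, the image of $\piXs_p$ equals $T_p\bL$, so $\img\theta_p \subseteq T_p\bL$. On the other hand, because $\omX|_{\bL}$ is the symplectic form of the leaf, an elementary calculation shows $\theta_p|_{T_p\bL}=\mathrm{id}$. Hence $\theta_p$ is the projection of $T_p\X$ onto $T_p\bL$ along the complement $\ker\theta_p$, and $\rank\theta_p = \dim\bL$. Since $\cF$ is a subbundle, $\rank\cF = \dim\bL$ at every point of $\X$.

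Irreducibility of $\cT{\X}$ then forces the splitting $\cT{\X} = \cF\oplus\cG$ to be trivial. Either $\cF=0$, in which case $\dim\bL = 0$ and $\bL$ is a single point; or $\cG=0$, in which case $\dim\bL = \dim\X$, so $\bL$ is a symplectic leaf of full dimension. In the latter case $\bL$ is open in $\X$ (the locus where the Poisson rank is maximal is open, and there $\bL$ coincides with a connected component), and it is also closed, being compact in a Hausdorff space; connectedness of $\X$ then gives $\bL = \X$. The rank identification is the only real content of the argument, and I do not expect any substantial obstacle.
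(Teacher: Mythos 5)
Your proposal is correct and is precisely the argument the paper intends: the corollary is stated as an immediate consequence of \autoref{thm:subcal} (via the splitting $\cT{\X}=\img\theta\oplus\ker\theta$ of \autoref{sec:subcal-split}), and your rank computation $\rank\cF=\dim\bL$ together with irreducibility and the open--closed argument for the full-rank case is exactly how the implication is meant to go.
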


\begin{corollary}
Let $(\X,\piX)$ be a compact K\"ahler Poisson manifold such that the Hodge number $h^{2,0}(\X)$ is equal to zero.  If $\bL \subset \X$ is a compact symplectic leaf, then $\bL$ is a single point.
\end{corollary}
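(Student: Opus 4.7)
The plan is to apply \autoref{thm:subcal} directly and observe that the hypothesis $h^{2,0}(\X) = 0$ forces the resulting extension to vanish identically. In more detail, I would feed the compact symplectic leaf $\bL \subset \X$ into \autoref{thm:subcal} to produce a global closed holomorphic two-form $\omX \in \coH[0]{\X,\forms[2]{\X}}$ whose restriction to $\bL$ is the holomorphic symplectic form of the leaf. Since $h^{2,0}(\X) = \dim \coH[0]{\X,\forms[2]{\X}}$, the assumption $h^{2,0}(\X)=0$ means that $\coH[0]{\X,\forms[2]{\X}} = 0$, and hence $\omX = 0$ identically on $\X$.

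The restriction of $\omX$ to $\bL$ must then also vanish. But by construction this restriction equals the symplectic form on $\bL$, which is nondegenerate; a nondegenerate alternating two-form can be identically zero only on a zero-dimensional manifold, so $\dim \bL = 0$ and $\bL$ is a single point. There is essentially no obstacle: all of the content is packaged inside \autoref{thm:subcal}, and the present corollary is immediate from it together with the definition of the Hodge number $h^{2,0}$.
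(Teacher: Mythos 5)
Your argument is correct and is exactly the intended one: the paper presents this corollary as an immediate consequence of \autoref{thm:subcal}, and your reasoning (the extension $\omX$ must vanish since $h^{2,0}(\X)=\dim\coH[0]{\X,\forms[2]{\X}}=0$, forcing the nondegenerate form $\omL=i^*\omX$ to vanish and hence $\dim\bL=0$) is the same deduction. No gaps.
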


Note that the vanishing $h^{2,0}(\X)=0$ holds for a wide class of manifolds of interest in Poisson geometry, including all Fano manifolds, all rational manifolds, and more generally all rationally connected manifolds.  Many natural examples arising in gauge theory and algebraic geometry fall into this class.  Applied to the case in which $\X$ is a projective space, this answers a question posed by the third author about the existence of projective embeddings that are compatible with Poisson structures:
\begin{corollary}
A compact holomorphic symplectic manifold of positive dimension can never be embedded as a Poisson submanifold in a projective space, for any choice of holomorphic Poisson structure on the latter.
\end{corollary}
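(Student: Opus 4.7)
My plan is to reduce the statement to the immediately preceding corollary, which asserts that on any compact K\"ahler Poisson manifold $(\X,\piX)$ with $h^{2,0}(\X)=0$, every compact symplectic leaf is a single point. I would argue by contradiction: suppose $\bL$ is a compact holomorphic symplectic manifold of positive dimension that is embedded as a Poisson submanifold of $\PP^n$ with respect to some holomorphic Poisson structure $\piX$ on $\PP^n$.

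The first step is to observe that, under these hypotheses, $\bL$ is in fact a symplectic leaf of $(\PP^n,\piX)$. Indeed, since $\bL$ is a Poisson submanifold, the symplectic leaf through any point $p\in\bL$ is contained in $\bL$; since the Poisson structure induced on $\bL$ is by assumption non-degenerate (it is given by a holomorphic symplectic form), this leaf is open in $\bL$; and since $\bL$ is connected and symplectic leaves are connected immersed submanifolds of constant rank, $\bL$ must coincide with a single symplectic leaf of $(\PP^n,\piX)$.

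The second step is simply to invoke the previous corollary. The projective space $\PP^n$ is a compact K\"ahler manifold whose Hodge numbers vanish off the diagonal, so in particular $h^{2,0}(\PP^n)=0$. Applying the corollary to $(\PP^n,\piX)$, every compact symplectic leaf must be a single point, contradicting the positive-dimensionality of $\bL$.

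The main (and essentially only) subtlety is the first step, namely ensuring that the conventions for ``Poisson submanifold'' and ``holomorphic symplectic embedding'' match up so that $\bL$ really is a leaf, rather than merely a union of leaves of lower dimension plus some extra locus. Once this is pinned down, the corollary reduces to a one-line consequence of the preceding one, so no further machinery from \autoref{thm:BBW-decomp} or \autoref{thm:subcal} needs to be invoked directly.
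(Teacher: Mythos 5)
Your proposal is correct and matches the paper's (implicit) argument: the paper derives this corollary precisely by noting that a positive-dimensional compact holomorphic symplectic Poisson submanifold would be a compact symplectic leaf, while $h^{2,0}(\PP^n)=0$ forces every compact symplectic leaf to be a point by the preceding corollary. Your first step, identifying such a submanifold as a single leaf via nondegeneracy and connectedness, is exactly the right way to pin down the reduction.
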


\paragraph{Acknowledgements:} We thank Pedro Frejlich and Ioan M\u{a}rcu\cb{t} for correspondence, and in particular for sharing their (currently unpublished) work on subcalibrations with us.  We also thank Henrique Bursztyn, Marco Gualtieri and Ruxandra Moraru for interesting discussions.  In particular \autoref{cor:iso} and \autoref{cor:irred} were pointed out by Bursztyn and Gualtieri, respectively.

This project grew out of discussions that took place during the school on ``Geometry and Dynamics of Foliations'', which was hosted May 18--22, 2020 by the Centre International de Rencontres Math\'ematiques (CIRM), as part of the second author's Jean-Morlet Chair.  We are grateful to the CIRM for their support, and for their remarkable agility in converting the entire event to a successful virtual format on short notice, in light of the COVID-19 pandemic.

S.D.~was supported by the ERC project ALKAGE (ERC grant Nr 670846).  S.D., J.V.P.~and F.T.~were supported by CAPES-COFECUB project Ma932/19.  S.D.~and~F.T.~were supported by the ANR project Foliage (ANR grant Nr ANR-16-CE40-0008-01). J.V.P.~was supported by CNPq, FAPERJ, and CIRM.  B.P.~was supported by a faculty startup grant at McGill University, and by the Natural Sciences and Engineering Research Council of Canada (NSERC), through Discovery Grant RGPIN-2020-05191.

\section{Subcalibrations of K\"ahler Poisson manifolds}
\label{sec:subcal}

Throughout this section, we fix a connected complex manifold $\X$ and a holomorphic Poisson structure on $\X$, i.e.~a holomorphic bivector $\piX \in \coH[0]{\X,\Tpol[2]{\X}}$ such that the Schouten bracket $[\piX,\piX]$ vanishes identically.  We recall that the \defn{anchor map} of $\piX$ is the $\cO{\X}$-linear map
\[
\piXs : \forms[1]{\X} \to \cT{\X}
\]
given by contraction of forms into $\piX$.  Its image is an involutive subsheaf, defining a possibly singular foliation of $\X$.  If $i : \bL \hookrightarrow \X$ is a leaf of this foliation, then $\piL := \piX|_\bL$ is a nondegenerate Poisson structure on $\bL$, so that its inverse
\[
\omL := \piL^{-1} \in \coH[0]{\bL,\forms[2]{\bL}}
\]
is a holomorphic symplectic form.  The pair $(\bL,\omL)$ is called a \defn{symplectic leaf} of $(\X,\pi)$.

\subsection{Extending forms on symplectic leaves}

 The following lemma gives a sufficient condition for the holomorphic symplectic form on a symplectic leaf to extend to all of $\X$.

\begin{lemma}\label{lem:extend}
Let $(\X,\pi)$ be a compact K\"ahler Poisson manifold, and suppose that $(\bL,\eta)$ is a compact symplectic leaf with inclusion $i : \bL \hookrightarrow \X$.  Then there exists a global closed holomorphic two-form $\omXo \in \coH[0]{\X,\forms[2]{\X}}$ such that $i^*\omXo = \omL$.
\end{lemma}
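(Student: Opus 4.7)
The plan is to reduce the problem to Hodge theory on the compact K\"ahler manifold $\X$: first produce a closed $C^\infty$ complex-valued two-form on $\X$ extending $\omL$, and then bootstrap this to a closed holomorphic two-form by applying the $\partial\bar\partial$-lemma to its $(2,0)$-component.

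For the smooth extension, I would regard the holomorphic Poisson bivector as part of the smooth real Poisson tensor $\piX+\overline{\piX}$ on $\X$, whose symplectic leaves coincide setwise with those of $\piX$ and whose symplectic form along $\bL$ is the real two-form $\omL+\omLb$. The (unpublished) work of Frejlich--M\u{a}rcu\cb{t} referenced in the introduction, which constructs a smooth subcalibration near any compact symplectic leaf, then yields a closed real $C^\infty$ two-form $\sigma_{\RR}$ on $\X$ with $i^{*}\sigma_{\RR}=\omL+\omLb$. Its $(2,0)$-component $\sigma^{2,0}$ satisfies $i^{*}\sigma^{2,0}=\omL$ (since $\omL$ is of type $(2,0)$ along $\bL$), while the identity $d\sigma_{\RR}=0$ forces $\partial\sigma^{2,0}=0$ by type considerations.

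To obtain a holomorphic correction, note that $\bar\partial\sigma^{2,0}$ is a $(2,1)$-form on $\X$ that is $\bar\partial$-exact (tautologically) and $\partial$-closed (since $\partial\bar\partial\sigma^{2,0}=-\bar\partial\partial\sigma^{2,0}=0$), hence $d$-exact and $d$-closed. The $\partial\bar\partial$-lemma on the compact K\"ahler manifold $\X$ supplies a $(1,0)$-form $\psi$ on $\X$ with $\bar\partial\sigma^{2,0}=\partial\bar\partial\psi$. Setting $\omXo:=\sigma^{2,0}+\partial\psi$, one has $\partial\omXo=0$ and $\bar\partial\omXo=\bar\partial\sigma^{2,0}+\bar\partial\partial\psi=0$, so $\omXo$ is a closed holomorphic two-form on $\X$. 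To verify $i^{*}\omXo=\omL$, observe that the difference $\partial(i^{*}\psi)=i^{*}\omXo-\omL$ is a $(2,0)$-form on the compact K\"ahler manifold $\bL$ that is $d$-closed (being the difference of two closed holomorphic two-forms) and $\partial$-exact. The $\partial\bar\partial$-lemma on $\bL$ then forces it to be $\partial\bar\partial$-exact, which for a form of bidegree $(2,0)$ means it must vanish.

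The main obstacle is the smooth-category input. Producing a closed $C^{\infty}$ two-form on $\X$ extending $\omL+\omLb$ is equivalent to lifting the de Rham class of the symplectic form of the compact leaf $\bL\subset(\X,\piX+\overline{\piX})$ to $\coH[2]{\X,\RR}$, which is far from automatic for a general closed complex submanifold of a compact K\"ahler manifold. This lifting is the essential content of the Frejlich--M\u{a}rcu\cb{t} construction and is the crucial bridge between the smooth category and the Hodge-theoretic argument applied above.
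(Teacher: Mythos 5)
The Hodge-theoretic upgrade in your second and third paragraphs is sound: given a closed $C^\infty$ two-form on $\X$ pulling back to $\omL+\omLb$ on $\bL$, the $\partial\bar\partial$-lemma on $\X$ does yield a closed holomorphic two-form, and your bidegree argument on $\bL$ correctly shows that the correction $\partial\psi$ does not disturb the restriction. The gap is entirely in your first step, and it cannot be outsourced as you propose: the Frejlich--M\u{a}rcu\cb{t} construction you cite produces a subcalibration only on a tubular neighbourhood of the compact leaf, not a globally defined closed two-form on all of $\X$. A global closed extension of $\omL+\omLb$ requires at minimum that $[\omL]$ lie in the image of $i^*:\coH[2]{\X,\CC}\to\coH[2]{\bL,\CC}$, and for a compact symplectic leaf of a general (even compact, regular) $C^\infty$ Poisson manifold this restriction map need not hit the leafwise symplectic class --- there is no smooth-category existence theorem to appeal to. So your argument takes as input essentially the global statement that the lemma is meant to establish; the caveat in your final paragraph is a genuine hole, not a delegation.

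The paper fills exactly this hole by a mechanism with no smooth analogue, using the holomorphicity of $\piX$ in an essential way: since $\hook{\piX}$ descends to Dolbeault cohomology, contracting the $2k$-th power of a K\"ahler class (where $2k=\dim\bL$) with $\piX^k$ gives a class $\alpha\in\coH[2k]{\X,\cO{\X}}$ whose restriction to $\bL$ is nonzero, because $\piL^k$ trivializes the anticanonical bundle of $\bL$ and $i^*\omega^{2k}\neq 0$. Hodge symmetry turns $\overline{\alpha}$ into a holomorphic $2k$-form $\mu$ with $i^*\mu$ a nonzero multiple of the Liouville form $\tfrac{1}{k!}\omL^k$, and the sought extension is $\omXo=\tfrac{1}{(k-1)!}\hook{\piX}^{k-1}\mu$, which is closed because holomorphic forms on compact K\"ahler manifolds are closed. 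This descent from a K\"ahler class through the Poisson bivector is the step you would need in place of your first paragraph.
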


\begin{proof}
This  proof  is a variant of the arguments in \cite[Proposition 3.1]{Druel2019} and \cite[Theorem 5.6]{Loray2018}.  Suppose that $\dim \bL = 2k$ and let $\omega \in \coH[1]{\X,\forms[1]{\X}} \cong \coH[1,1]{\X}$ be any K\"ahler class.  Note that since $\pi$ is holomorphic, the contraction operator $\hook{\pi}$ on $\forms{\X}$ descends to the Dolbeault cohomology $\coH{\X,\forms{\X}}$.  In particular, we have a well-defined class
\[
\alpha := \hook{\pi}^k \omega^{2k} \in \coH[2k]{\X,\cO{\X}}
\]
We claim that $i^*\alpha \in \coH[2k]{\bL,\cO{\bL}}$ is nonzero.  Indeed, since $\pi|_\bL = \piL$, we have the following commutative diagram:
\[
\xymatrix{
\coH[2k]{\X,\forms[2k]{\X}} \ar[r]^-{\hook{\piX}^k} \ar[d]^-{i^*} & \coH[2k]{\X,\cO{\X}} \ar[d]^-{i^*} \\
\coH[2k]{\bL,\forms[2k]{\bL}} \ar[r]^-{\hook{\piL}^k} & \coH[2k]{\bL,\cO{\bL}}
}
\]
The bottom arrow is an isomorphism since $\piL^k$ is a trivialization of the anticanonical bundle of $\bL$.  Meanwhile $i^*\omega^{2k} \in \coH[2k]{\bL,\forms[2k]{\bL}}$ is nonzero since $i^*\omega$ is a K\"ahler class on $\bL$.  It follows that $i^*\alpha = \hook{\piL}^ki^*\omega^{2k} \ne 0$ as claimed.

Using the Hodge symmetry $\overline{\coH[2k]{\X,\cO{\X}}} \cong \coH[0]{\X,\forms[2k]{\X}}$, the complex conjugate of $\alpha$ gives a holomorphic $2k$-form
\[
\mu \in \coH[0]{\X,\forms[2k]{\X}}
\]
such that $i^*\mu \in \coH[0]{\bL,\forms[2k]{\bL}}$ is nonzero. Since the canonical bundle of $\bL$ is trivial, $i^*\mu$ must be a constant multiple of the holomorphic Liouville volume form associated with the holomorphic symplectic structure on $\bL$.  Hence by rescaling $\mu$, we may assume without loss of generality $i^*\mu = \frac{1}{k!}\omL^k$.  Now consider the global holomorphic two-form
\[
\omXo := \tfrac{1}{(k-1)!}\hook{\piX}^{k-1}\mu,
\]
which is closed by Hodge theory, since $\X$ is compact K\"ahler.  We claim that it restricts to the symplectic form on $\bL$.  Indeed,
\[
i^*\omXo = \tfrac{1}{(k-1)!}i^*(\hook{\piX}^{k-1}\mu) = \tfrac{1}{(k-1)!}\hook{\piL}^{k-1}i^*\mu = \tfrac{1}{(k-1)!k!}\hook{\piL}^{k-1}\omL^k = \omL
\]
as desired.
\end{proof}

The lemma only gives the existence of the holomorphic two-form $\omXo$, but says little about its global properties.  Nevertheless, it can be used to produce a two-form with the following property, the ramifications of which are explained in \autoref{sec:subcal-split} below:
\begin{definition}[Frejlich--M\u{a}rcu\cb{t}]
A holomorphic two-form $\omX \in \coH[0]{\X,\forms[2]{\X}}$ is called a \defn{subcalibration of $(\X,\pi)$} if it is closed, and the composition
\[
\theta := \piXs\omXf \in \sEnd{\cT{\X}}
\]
is idempotent (i.e.~$\theta^2=\theta$), where $\omXf : \cT{\X} \to \forms[1]{\X}$ is the map defined by contraction of vector fields into $\omX$.
\end{definition}

We will be interested in subcalibrations that are compatible with our chosen symplectic leaf $\bL$ in the following sense:
\begin{definition}
A subcalibration $\sigma$ of $(\X,\piX)$ is \defn{compatible with the symplectic leaf $i : \bL\hookrightarrow  \X$} if $i^*\img \theta = \cT{\bL} \subset i^*\cT{\X}$.
\end{definition}
Equivalently, the subcalibration is compatible with $\bL$ if the projection of $\pi$ to $\wedge^2\img\theta$ is a constant rank bivector that is an extension of the nondegenerate Poisson structure $\pi_\bL$ on $\bL$.

The following gives a simple condition that allows an arbitrary extension of the two-form on $\bL$ to be refined to a subcalibration compatible with $\bL$.  Note that it applies, in particular, whenever $\X$ is a compact connected K\"ahler manifold:

\begin{lemma}\label{lem:subcal-correct}
Let $(\X,\pi)$ be a holomorphic Poisson manifold and let $(\bL,\omL)$ be a symplectic leaf of $(\X,\pi)$.  Suppose that the following conditions hold:
\begin{enumerate}
\item $h^0(\X,\cO{\X}) = 1$, i.e.~every global holomorphic function on $\X$ is constant
\item $\coH[0]{\X,\forms[2]{\X}} = \coH[0]{\X,\forms[2,\textrm{cl}]{\X}}$, i.e.~every global holomorphic two-form on $\X$ is closed
\item $\omL$ extends to a global holomorphic two-form on $\X$
\end{enumerate}
Then $\omL$ extends to a subcalibration compatible with $\bL$.
\end{lemma}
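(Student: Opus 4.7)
The plan is to begin with any extension $\omXo$ of $\omL$ guaranteed by hypothesis~(3) and to modify it by a universal polynomial expression in $\pis$ and $\omXof$ until the resulting composition with $\pis$ becomes fiberwise idempotent. Set $\theta_0 := \pis \omXof \in \sEnd{\cT{\X}}$. Along $\bL$ the endomorphism $\theta_0|_\bL$ is a fiberwise projector of rank $m := \dim \bL$ onto $\cT{\bL} \subset \cT{\X}|_\bL$: for $v \in \cT{\bL}$, $\theta_0(v) = v$ because $\omL = \piL^{-1}$, while $\img \pis|_\bL \subseteq \cT{\bL}$ by definition of a leaf. Each trace $\mathrm{tr}(\theta_0^k)$ is a global holomorphic function on $\X$; hypothesis~(1) forces it to be constant, and evaluation at any point of $\bL$ gives $\mathrm{tr}(\theta_0^k) = m$ for all $k \geq 1$. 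Newton's identities then pin the fiberwise characteristic polynomial of $\theta_0$ to $(t-1)^m t^n$ at every point, where $n := \dim \X - m$.

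Next I would apply B\'ezout to obtain polynomials $A(t), B(t)$ with $A(t) t^n + B(t) (t-1)^m = 1$, and set $f(t) := A(t) t^n = t\, g(t)$ with $g(t) := A(t) t^{n-1}$. By Cayley--Hamilton applied to the common characteristic polynomial, $f(\theta_0)^2 = f(\theta_0)$; fiberwise $f(\theta_0)$ is the spectral projector onto the generalized $1$-eigenspace of $\theta_0$. As candidate subcalibration I propose the 2-form $\omX$ defined by
\[
\omXf := g(\omXof \pis) \circ \omXof \colon \cT{\X} \to \forms[1]{\X}.
\]
The antisymmetry of $\pis$ and $\omXof$, combined with the commutation identity $\omXof \circ (\pis \omXof)^k = (\omXof \pis)^k \circ \omXof$, shows that $\omXf$ is skew and hence defines a global holomorphic 2-form $\omX$; the analogous identity $\pis \circ (\omXof \pis)^k = (\pis \omXof)^k \circ \pis$ then yields $\pis \omXf = \theta_0 g(\theta_0) = f(\theta_0)$, which is idempotent. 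Closedness of $\omX$ is granted by hypothesis~(2), so $\omX$ is a subcalibration.

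Finally I would verify compatibility with $\bL$. Since $\theta_0|_\bL$ is already idempotent with spectrum in $\{0, 1\}$ and $f(0) = 0$, $f(1) = 1$, the idempotent $\theta := \pis \omXf = f(\theta_0)$ satisfies $\theta|_\bL = \theta_0|_\bL$, whose image is $\cT{\bL}$; this is precisely the compatibility condition $i^* \img \theta = \cT{\bL}$. To obtain $i^* \omX = \omL$: for any $v \in \cT{\bL}$, $\pis|_\bL(\iota_v \omX) = \theta(v) = v$, and since $\ker(\pis|_\bL)$ is the conormal bundle of $\bL$, this is equivalent to $\piL^\sharp(i^*(\iota_v \omX)) = v = \piL^\sharp(\iota_v \omL)$; invertibility of $\piL^\sharp$ on $\cT{\bL}$ then forces $i^*(\iota_v \omX) = \iota_v \omL$, and varying $v$ yields $i^* \omX = \omL$. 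The main technical hurdle I anticipate is the algebraic bookkeeping needed to simultaneously produce a skew $\omXf$ whose composition $\pis \omXf$ realizes the polynomial $f(\theta_0)$; this hinges crucially on hypothesis~(1), which makes the fiberwise characteristic polynomial of $\theta_0$ globally constant so that a single polynomial $f$ can act uniformly as the desired projector across all of $\X$.
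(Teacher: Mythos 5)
Your argument is correct, and its first half is the same as the paper's: starting from an arbitrary extension $\omXo$, one checks that $\theta_0 := \pis\omXof$ restricts over $\bL$ to the projector onto $\cT{\bL}$, and hypothesis (1) forces the fibrewise characteristic polynomial of $\theta_0$ to be constant, equal to $t^{n}(t-1)^{m}$ with $m=\dim\bL$ and $n = \dim\X - m$ (the paper reads this off from the coefficients of the characteristic polynomial rather than from traces of powers and Newton's identities, but that is the same observation). You diverge at the correction step. The paper proceeds geometrically in two stages: it splits $\cT{\X}=\cF\oplus\cG$ into generalized eigenbundles, projects $\omXo$ to $\wedge^2\cF^\vee$, and then, using the identity $\theta_1\pis=\pis\theta_1^\vee$, shows that $\pi$ is block diagonal for a refined splitting $\cF'\oplus\cG$ and takes $\omX$ to be the inverse of the nondegenerate $\cF'$-component of $\pi$. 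You instead write the corrected form in closed form by functional calculus, $\omX^\flat := g(\omXof\pis)\circ\omXof$ with $t\,g(t)$ the B\'ezout idempotent for $t^{n}(t-1)^{m}$, so that $\pis\omX^\flat = f(\theta_0)$ is idempotent by Cayley--Hamilton; the skew-symmetry check via the commutation identities $\omXof(\pis\omXof)^k=(\omXof\pis)^k\omXof$ is the one point requiring care, and you handle it correctly, as you do the verification of compatibility and of $i^*\omX=\omL$. Your route buys explicitness and avoids having to prove that $\pi$ is block diagonal with invertible regular block; the paper's route buys a more geometric description of the output (its $\omX$ is literally the inverse of the regular part of $\pi$ and annihilates $\cG$, neither of which is guaranteed by your formula, though neither is required by the definition of a subcalibration). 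Two cosmetic remarks: only finitely many traces are needed for Newton's identities, and in the degenerate case $n=0$ (where $\pi$ is symplectic and the statement is trivial) one should choose the B\'ezout coefficient $A$ with $A(0)=0$ so that $g(t)=A(t)t^{n-1}$ is still a polynomial.
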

\begin{proof}
Let $\omXo \in \coH[0]{\X,\forms[2]{\X}}$ be any extension of the symplectic form on $\bL$ to a global holomorphic two-form on $\X$, and let $\theta_0 := \piXs\omXof \in \sEnd{\cT{\X}}$.  Since $\piL = \piX|_\bL$ is inverse to the symplectic form $\omL = i^*\omXo$ on $\bL$, it follows easily that $\theta_0|_\bL \in \sEnd{i^*\cT{\X}}$ is idempotent with image $\cT{\bL} \subset i^*\cT{\X}$, giving a splitting
\[
i^*\cT{\X} \cong \cT{\bL}\oplus \ker \theta_0|_\bL
\]
In particular, the characteristic polynomial of $\theta_0|_{\bL}$ is given by
\[
P(t) = t^{n-2k}(t-1)^{2k}
\]
where $n = \dim \X$ and $k = \tfrac{1}{2} \dim \bL$.  But the coefficients of the characteristic polynomial of $\theta_0$ are holomorphic functions on $\X$, and since $h^{0}(\X,\cO{\X}) = 1$, such functions are constant. We conclude that $P(t)$ is the characteristic polynomial of $\theta_0$ over all of $\X$. 

Taking generalized eigenspaces of $\theta_0$, we obtain a decomposition
\[
\cT{\X} \cong \cF \oplus \cG
\]
where $i^*\cF = \cT{\bL}$.  Projecting the two-form $\omXo$ to $\wedge^2\cF^\vee$ we therefore obtain a new holomorphic two-form $\omX_1$ such that $i^*\omX_1 = \omL$, which has the additional property that $\cG \subset \ker \omX_1$.   Then $\theta_1 = \piXs\omX_1^\flat$ also has characteristic polynomial $P(t)$, giving a splitting $\cT{\X} = \cF' \oplus \cG$ with respect to which $\theta_1$ takes on the Jordan--Chevalley block form
\[
\theta_1 = \begin{pmatrix}
1 + \phi  & 0\\
0 & 0
\end{pmatrix}
\]
where $\phi \in \End{\cF'}$ is nilpotent.  Using the identity   $\theta_1\piXs=\piXs\theta_1^\vee$ as maps $\forms[1]{\X} \to  \cT{\X}$,  one calculates that $\piX$ must be block diagonal, i.e.~equal to the sum of its projections to $\wedge^2\cG$ and $\wedge^2\cF'$.  The latter projection, say $\piX' \in \wedge^2\cF'$, is then nondegenerate because $\theta_1=\piXs\omX_1^\flat$ is invertible on $\cF'$.  We may therefore define a two-form $\omX := (\pi')^{-1} \in \wedge^2 \cF' \subset \forms[2]{\X}$. By construction, the endomorphism $\theta := \piXs\omX^\flat$ preserves the decomposition $\cT{\X} = \cF' \oplus \cG$, acts as the identity on $\cF'$, and has $\cG$ as its kernel.  In particular, $\theta$ is idempotent and $\omX$ restricts to the symplectic form on $\bL$.  Moreover, $\omX$ is closed by hypothesis.  Thus $\omX$ is a subcalibration of $\pi$ compatible with $\bL$, as desired.
\end{proof}

Combining \autoref{lem:extend} and \autoref{lem:subcal-correct}, we immediately obtain the following statement, which is a rephrasing of \autoref{thm:subcal} from the introduction:
\begin{corollary}\label{cor:exists-subcal}
If $(\X,\pi)$ is a compact K\"ahler manifold equipped with a holomorphic Poisson structure and $\bL \subset \X$ is a compact symplectic leaf, then there exists a subcalibration of $(\X,\pi)$ compatible with $\bL$.
\end{corollary}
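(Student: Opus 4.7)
The plan is to verify the three hypotheses of \autoref{lem:subcal-correct} in the setting at hand, and then simply invoke that lemma. Since the section fixes $\X$ to be connected, hypothesis (1) is immediate: on a compact connected complex manifold the maximum principle forces $h^0(\X,\cO{\X}) = 1$. Hypothesis (2) follows from the K\"ahler assumption via Hodge theory, as every global holomorphic form on a compact K\"ahler manifold is closed. Hypothesis (3), that the holomorphic symplectic form $\omL$ on $\bL$ extends to a global holomorphic two-form on $\X$, is precisely the conclusion of \autoref{lem:extend}.

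With these three conditions verified, \autoref{lem:subcal-correct} applies directly and produces the desired subcalibration compatible with $\bL$. I do not expect any real obstacle here, since the corollary is designed to be a clean packaging of the two preceding lemmas. The only thing to double-check is the consistency of the two-form produced by \autoref{lem:extend} with the framework of \autoref{lem:subcal-correct}, but the statements match (one outputs an extension of $\omL$, the other takes such an extension as input), so the composition goes through without modification. Finally, the resulting corollary is exactly the statement of \autoref{thm:subcal} from the introduction, completing the proof.
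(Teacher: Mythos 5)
Your proof is correct and matches the paper's own argument, which simply combines \autoref{lem:extend} (for the extension of $\omL$) with \autoref{lem:subcal-correct} (whose first two hypotheses hold for any compact connected K\"ahler manifold, exactly as you note). Nothing further is needed.
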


\subsection{Splitting the tangent bundle}
\label{sec:subcal-split}

Note that if $\omX$ is a subcalibration of $(\X,\pi)$, then the operator $\theta := \pis\omXf$ gives a decomposition
\[
\cT{\X} = \cF \oplus \cG
\]
of the tangent bundle into the complementary subbundles
\[
\cF := \img \theta, \qquad \cG := \ker \theta.
\]
We may then project $\pi$ onto the corresponding summands in the exterior powers to obtain global bivectors
\[
\pi_\cF \in \coH[0]{\X,\wedge^2 \cF} \qquad \pi_\cG \in \coH[0]{\X \wedge^2 \cG}.
\]
Similarly, the form $\sigma$ projects to a pair of sections
\[
\omX_\cF \in \coH[0]{\X, \wedge^2 \cF^\vee} \qquad \omX_\cG \in \coH[0]{\X, \wedge^2 \cG^\vee},
\]
which we may view as global holomorphic two-forms on $\X$ via the splitting $\cT{\X}^\vee \cong \cF^\vee \oplus \cG^\vee$.

An elementary linear algebra computation then shows that
\[
\pi = \piF + \piG, \qquad \omX = \omF + \omG
\]
and $\piF$ is inverse to $\omF$ on $\cF$, so that
\[
\cF = \img \piF \qquad \textrm{and} \qquad \cG = \ker \omF.
\]
With this notation in place, we may state the following fundamental result about subcalibrations, due to Frejlich--M\u{a}rcu\cb{t}, which will play a key role in what follows:
\begin{theorem}[Frejlich--M\u{a}rcu\cb{t}]\label{thm:frelich-marcut}
Suppose that $\sigma$ is a subcalibration of $(\X,\pi)$.   Then the bivector fields $\piF,\piG$ are Schouten commuting Poisson structures, i.e.
\begin{align}
[\piF,\piF] = [\piG,\piG] = [\piF,\piG] = 0. \label{eq:commute}
\end{align}
In particular, $\cF = \img \piF$ is involutive. Moreover, $\cG$ is involutive if and only if $\omF$ is closed.
\end{theorem}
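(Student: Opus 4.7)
My plan is to exploit the bigrading on polyvector fields and differential forms induced by $\cT{\X}=\cF\oplus\cG$, process both $[\pi,\pi]=0$ and $d\sigma=0$ through it, and use the partial duality $\omF^\flat\colon\cF\to\cF^\ast$ between $\piF$ and $\omF$ on $\cF$ to bridge them.

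First, because $\piF\in\wedge^2\cF$ and $\piG\in\wedge^2\cG$, the Leibniz rule for the Schouten bracket pins down the bidegrees of the three brackets: $[\piF,\piF]$ lies in bidegrees $(3,0)\oplus(2,1)$ (the $(2,1)$ piece detecting the $\cG$-part of brackets of $\cF$-fields), $[\piG,\piG]$ lies in $(1,2)\oplus(0,3)$, and $[\piF,\piG]$ lies in $(2,1)\oplus(1,2)$. Decomposing $0=[\pi,\pi]=[\piF,\piF]+2[\piF,\piG]+[\piG,\piG]$ by bidegree yields $[\piF,\piF]^{(3,0)}=0$ and $[\piG,\piG]^{(0,3)}=0$ on the nose, and leaves the two coupled equations
\[
[\piF,\piF]^{(2,1)}+2[\piF,\piG]^{(2,1)}=0,\qquad [\piG,\piG]^{(1,2)}+2[\piF,\piG]^{(1,2)}=0.
\]

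To decouple these I would decompose $d\sigma=0$ analogously. A symmetric observation shows that $d\omF$ has no $(0,3)$ component and $d\omG$ has no $(3,0)$ component, so the extreme bidegrees of $d\sigma=0$ are automatic, and the mixed ones give $(d\omF)^{(2,1)}+(d\omG)^{(2,1)}=0$ and $(d\omF)^{(1,2)}+(d\omG)^{(1,2)}=0$. The classical equivalence, for a nondegenerate 2-form, between closedness and the Jacobi identity of its inverse bivector applies via $\omF^\flat|_\cF\colon\cF\to\cF^\ast$ and identifies $(d\omF)^{(3,0)}$ with $[\piF,\piF]^{(3,0)}$ (both already zero). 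The heart of the argument is to extend this dictionary to the mixed bidegrees: contracting the $\cF$-valued factors of each mixed-bidegree Schouten bracket through $\omF^\flat$, and using the identity $\theta\piXs=\piXs\theta^\vee$ coming from idempotency of $\theta$ to control cross-terms, one recognizes the outputs as mixed-bidegree components of $d\omF$ and $d\omG$. These identifications, combined with the relations $(d\omF)^{(p,q)}+(d\omG)^{(p,q)}=0$, supply the second pair of equations needed to complement Step~1, forcing every Schouten bracket to vanish.

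Involutivity of $\cF=\img\piF^\sharp$ is then automatic, since the image of any Poisson anchor is involutive. For the final biconditional, the direction ``$\omF$ closed $\Rightarrow\cG$ involutive'' is the standard Cartan-calculus argument: for $X,Y\in\ker\omF^\flat$, $L_X\omF=d\iota_X\omF+\iota_Xd\omF=0$, so $\iota_{[X,Y]}\omF=L_X\iota_Y\omF-\iota_YL_X\omF=0$. Conversely, assume $\cG$ involutive; Cartan's formula for $d\omF(X,Y_1,Y_2)$ with $X\in\cF$ and $Y_1,Y_2\in\cG$ collapses to $-\omF([Y_1,Y_2]_\cF,X)=0$, so $(d\omF)^{(1,2)}=0$. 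Involutivity of $\cF$ forces the analogous $(2,1)$-component of $d\omG$ to vanish, hence $(d\omF)^{(2,1)}=0$ by $d\sigma=0$; together with the automatic vanishing of $(d\omF)^{(3,0)}$ (since $(d\omG)^{(3,0)}=0$ by bidegree), this gives $d\omF=0$. The main obstacle I anticipate is the dictionary described in the second paragraph, namely the precise extension of the symplectic-Poisson duality to the mixed bidegrees on the partially nondegenerate $\cF$-summand; everything else is bookkeeping with bidegrees or a standard general fact.
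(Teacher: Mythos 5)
Your bidegree bookkeeping in the first step is correct, and your treatment of the final equivalence (``$\cG$ involutive $\iff$ $\omF$ closed'') is sound and essentially identical to the paper's argument: both directions come from Cartan calculus plus the observation that $\dd\omF$ and $\dd\omG$ land in complementary summands of $\forms[2]{\X}$ once the relevant involutivity is known. The problem is the core of the theorem. Decomposing $[\pi,\pi]=0$ by bidegree gives you only the extreme components $[\piF,\piF]^{(3,0)}=0$ and $[\piG,\piG]^{(0,3)}=0$ plus two \emph{coupled} equations in the mixed bidegrees, and everything then hinges on the ``dictionary'' you defer to: identifying the mixed-bidegree Schouten brackets with mixed-bidegree components of $\dd\omF$ and $\dd\omG$. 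You flag this yourself as the main obstacle, and it is a genuine gap, not a routine verification. Worse, the dictionary as you describe it cannot work in the stated form for the $(1,2)$ components: $\omF^\flat$ vanishes identically on $\cG$, and $\piG$ is \emph{not} the inverse of $\omG$ (nothing in the definition of a subcalibration relates them --- e.g.\ one can have $\piG=0$ while $\omG\neq 0$), so there is no partial duality available to transport $[\piG,\piG]^{(1,2)}$ or $[\piF,\piG]^{(1,2)}$ into components of $\dd\sigma$. The vanishing of $[\piG,\piG]$ simply cannot be read off from $\dd\omG$.

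The missing input is supplied in the paper by a different mechanism: gauge transformations. Because $\sigma$ is closed and $\theta=\pis\omXf$ is idempotent, the gauge transforms $B(t)\star\pi$ with $B(t)=t\sigma$ are Poisson for all $t\neq -1$, and a short computation using $(1+t\theta^\vee)^{-1}=1-\tfrac{t}{1+t}\theta^\vee$ shows that this family is exactly $\tfrac{1}{1+t}\piF+\piG$. Expanding $[\pi(t),\pi(t)]=0$ in powers of $\tfrac{1}{1+t}$ then decouples all three brackets at once; this is precisely where the closedness of $\sigma$ enters (a gauge transform by a non-closed form need not be Poisson), playing the role you hoped your dictionary would. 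To rescue your approach you would need either to prove directly that $\tfrac{1}{1+t}\piF+\piG$ (or equivalently $\piF$ alone, via $\piF^\sharp=\pis\omXf\pis$) is Poisson, or to establish a concrete identity expressing $[\piF,\piG]$ in terms of $\dd\sigma$ and $\theta$; as written, the argument does not close.
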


\begin{proof}The proof of Frejlich--M\u{a}rcu\cb{t} makes elegant use of the notion of a Dirac structure.  For completeness, we present here an essentially equivalent argument based on the related notion of a gauge transformation of Poisson structures.

We recall from~\cite[Section 4]{Severa2001} that if $B$ is a closed holomorphic two-form 
 such that the operator $1+B^\flat\pis \in \sEnd{\forms[1]{\X}}$ is invertible, then the gauge transformation of $\pi$ by $B$ is a new Poisson structure $B \star \pi \in \coH[0]{\X,\wedge^2\cT{\X}}$ whose underlying foliation is the same as that of $\pi$, but with the symplectic form on each leaf modified by adding the pullback of $B$.  Equivalently, $B\star\pi$ is determined by its anchor map, which is given by the formula $(B\star \pi)^\sharp = \pis(1+B^\flat \pis)^{-1} : \forms[1]{\X} \to \cT{\X}$. The skew symmetry of $B\star \pi$ follows from the skew symmetry of $\pi$ and $B$, while the identity $[B\star \pi,B\star \pi]=0$ is deduced using the closedness of $B$ and the equation $[\pi,\pi]=0$.

We apply this construction to the family of two-forms $B(t) := t\omX$.  Note that since $\theta := \pis \omXf$ is idempotent, the operator $1 + B^\flat(t)\pis = 1 + t\theta^\vee \in \sEnd{\forms[1]{\X}}$ is invertible for all $t \ne -1$.  We therefore obtain a family of holomorphic Poisson bivectors
\[
\pi(t) := B(t) \star \pi, \qquad t \in \CC\setminus\{-1\},
\]
with anchor map
\[
\pis(t) = \pis(1+t\theta^\vee)^{-1}. 
\]

Since $\theta$ is idempotent, we have $(1+t\theta^\vee)^{-1} = 1 - \tfrac{t}{1+t}\theta^\vee$, which implies that
\[
\pi(t) = \piF + \piG -\tfrac{t}{1+t}\piF  = \tfrac{1}{1+t}\piF + \piG
\]
for all $t \neq -1$.  Since $[\pi(t),\pi(t)] = 0$ for all $t \neq -1$, the bilinearity of the Schouten bracket therefore implies the desired identities \eqref{eq:commute}.   This implies immediately that $\cF = \img\piF$ is the tangent sheaf of the symplectic foliation of the Poisson bivector $\piF$; in particular, it is involutive.  

It remains to show that $\cG$ is involutive if and only if $\omF$ is closed.  To this end, observe that if $\omF$ is closed, then its kernel $\cG = \ker \omF$ is involutive by elementary Cartan calculus.
Conversely, if $\cG$ is involutive, then both $\cF^\vee$ and $\cG^\vee$ generate differential ideals in $\forms{\X}$.  This implies that the exterior derivatives of $\omF \in \wedge^2\cF^\vee$ and $\omG \in \wedge^2\cG^\vee$ lie in complementary subbundles of $\forms[2]{\X}$, namely
\[
\dd \omF \in \wedge^3 \cF^\vee \oplus (\wedge^2 \cF^\vee \otimes \cG^\vee)\qquad \dd \omG \in (\wedge^2 \cG^\vee \otimes \cF^\vee) \oplus \wedge^3\cG^\vee.
\]
Since $\dd \omF + \dd\omG = \dd \omX = 0$, we conclude that $\dd\omF = 0$, as desired.
\end{proof}

\section{Product decompositions}
\label{sec:beauville}

The subcalibrations discussed in the previous section give, in particular, a decomposition of the tangent bundle into involutive subbundles.  In this section we fix a complex manifold $\X$ and give criteria for such a decomposition of $\cT{\X}$ to arise from a decomposition of some covering of $\X$ as a product as in \autoref{conj:beauville}. The main objects are therefore the following:

\begin{definition}\label{def:complement}
Suppose that $\cF$ is a regular foliation of $\X$.  A \defn{foliation complementary to $\cF$}  is an involutive holomorphic subbundle $\cG \subset \cT{\X}$ such that $\cT{\X} = \cF \oplus \cG$.
\end{definition}
Note that the definition is symmetric: if $\cG$ is complementary to $\cF$ then $\cF$ is complementary to $\cG$.  However, in what follows, the foliations $\cF$ and $\cG$ will play markedly different roles.

\subsection{Fibrations, flat connections and suspensions}
\begin{definition}\label{def:fibration}
Let $\cF \subset \cT{\X}$ be a regular holomorphic foliation.  We say that $\cF$ is a \defn{fibration} if there exists a surjective holomorphic submersion $f : \X \to \Y$ whose fibres are the leaves of $\cF$. In this case, we call $\Y$ the \defn{leaf space} of $\cF$ and the map $f$ the \defn{quotient map}.
\end{definition}

\begin{remark}
The submersion $f: \X \to \Y$, if it exists, is unique up to  isomorphism, so there is no ambiguity in referring to $\Y$ as ``the'' leaf space of the foliation $\cF$.
\end{remark}

Suppose that $\cF$ is a fibration, and $\cG$ is a foliation complementary to $\cF$.  Then $\cG$ is precisely the data of a flat connection on the submersion $f : \X \to \Y$, in the sense of Ehresmann~\cite{Ehresmann1951}.  Recall that such a connection is \defn{complete} if it has the path lifting propery,~i.e.~given any point $y \in \Y$, any path $\gamma : [0,1] \to \Y$ starting at $\gamma(0) = y$, and any point $x$ lying in the fibre $f^{-1}(y) \subset \X$, there exists a unique path $\tgamma : [0,1] \to \X$ that is tangent to the leaves of $\cG$ and starts at the point $\tgamma(0)=x$.   If $f$ is proper, then every flat Ehresmann connection is complete in this sense.

\begin{definition}\label{def:suspension}
Suppose that $(\cF,\cG)$ is a pair consisting of a regular holomorphic foliation $\cF$ and a complementary foliation $\cG$.  We say that the pair $(\cF,\cG)$ is a \defn{suspension} if $\cF$ is a fibration for which $\cG$ defines a complete flat Ehresmann connection.
\end{definition}

Suppose that $(\cF,\cG)$ is a suspension with underlying fibration $f : \X \to \Y$, and $y \in \Y$ is a point in the leaf space of $\cF$.  Let $\bL = f^{-1}(y)$ be the fibre.  The \defn{monodromy representation} at $y$ is the homomorphism $\pi_1(\Y,y) \to \Aut{\bL}$ defined by declaring that a homotopy class $[\gamma] \in \pi_1(\Y,y)$ acts on $\bL$ by sending $x \in \bL$ to $\tgamma(1)$ where $\tgamma$ is the path lifting $\gamma$ with initial condition $\tgamma(0) = x$.  Lifting arbitrary paths in $\Y$ based at $y$ then gives a canonical isomorphism
\[
\X \cong \frac{\bL \times \tY}{\pi_1(\Y,y)}
\]
where $\tY$ is the universal cover of $\Y$ based at $y$, and $\pi_1(\Y,y)$ acts diagonally on the product.  Moreover the pullbacks of $\cF$ and $\cG$ to $\bL \times \tY$ coincide with the tangent bundle of the factors $\bL$ and $\tY$, respectively, as in \autoref{conj:beauville}.  In this way, we obtain an equivalence between suspensions $(\cF,\cG)$ and homomorphisms $\rho : \pi_1(\Y,y)\to\Aut{\bL}$, where $\Y$ and $\bL$ are complex manifolds and $y\in\Y$.

\begin{remark}\label{rmk:polarized}
We will make repeated use of the following observation: if $\X$ is compact K\"ahler, the restriction of any K\"ahler class on $\X$ to a leaf $\bL$ of $\cF$ gives a K\"ahler class $\omega \in \coH[1,1]{\bL}$ that is invariant under the monodromy representation, i.e.~the monodromy representation is given by a homomorphism
\[
\rho : \pi_1(\Y,y) \to \Autom{\omega}{\bL}
\]
where $\Autom{\omega}{\bL}$ is the group of biholomorphisms of $\bL$ that fix the class $\omega$.  The structure of $\Autom{\omega}{\bL}$ is well understood thanks to work of Lieberman~\cite{Lieberman78}, and this will allow us to control the behaviour of various suspensions.
\end{remark}

\subsection{Holonomy groupoids}

In \autoref{thm:Ehresmann} below, we will give criteria for a pair of foliations on a compact K\"ahler manifold to be a suspension.  Our key technical tool is the holonomy groupoid $\HolF$ of a regular foliation $\cF$.  We briefly recall the construction and refer the reader to  \cite{Moerdijk2003,Winkelnkemper1983} for details.  (Note that in \cite{Winkelnkemper1983}, the holonomy groupoid is called the ``graph'' of $\cF$.)

 If $x, y$ are two points on the same leaf $\bL$ of the foliation $\cF$,  and $\gamma$ is a path from $x$ to $y$ in $\bL$, then by lifting $\gamma$ to nearby leaves one obtains a germ of a biholomorphism from the leaf space of $\cF|_{\U_x}$ to the leaf space of $\cF|_{\U_y}$ where $\U_x,\U_y \subset \X$ are sufficiently small neighbourhoods of $x$ and $y$, respectively. This germ is called the holonomy transformation induced by $\gamma$. We say that two paths tangent to $\cF$ have the same holonomy class if their endpoints are the same, and they induce the same holonomy transformation.  
 
The \defn{holonomy groupoid}  $\HolF$ is the set of holonomy classes of paths tangent to $\cF$.
 It carries a natural complex manifold structure of dimension equal to $\dim \X + \rank \cF$, and comes equipped with a pair of surjective submersions $s,t : \HolF \to \X$ that pick out the endpoints of paths.  The usual composition of paths then makes $\HolF$ into a complex Lie groupoid over $\X$.

\begin{remark} In the differentiable setting, the holonomy groupoid may fail to be Hausdorff, but in the analytic setting we consider here, the Hausdorffness is guaranteed by \cite[Corollary of Proposition 2.1]{Winkelnkemper1983}. 
\end{remark}

\begin{remark}\label{rmk:kahler}
The map $(s,t) : \HolF \to \X\times \X$ is an immersion~\cite[0.3]{Winkelnkemper1983}.  Hence a K\"ahler structure on $\X$ induces a K\"ahler structure on $\HolF$ by pullback.
\end{remark}

Suppose that $x \in \X$, and let $\bL \subset \X$ be the leaf through $\X$.  We denote by $\HolF_x \subset \HolF$ the group of holonomy classes of loops based at $x$.  Since homotopic loops induce the same holonomy transformation, $\HolF_x$ is a quotient of the fundamental group $\pi_1(\bL,x)$.  Moreover, it acts freely on the fibre $s^{-1}(x)$, and the map $t$ descends to an isomorphism $s^{-1}(x)/\HolF_x \cong \bL$.   Put differently, the fibration $s^{-1}(\bL) \to \bL$ is a fibre bundle equipped with a complete flat Ehresmann connection whose horizontal leaves are the fibres $t^{-1}(y) \subset s^{-1}(\bL)$ where $y \in \bL$.  The holonomy group $\HolF_x$ is the image of the homomorphism
\begin{align}
\hol_x : \pi_1(\bL,x) \to \Aut{s^{-1}(x)} \label{eq:hol}
\end{align}
obtained by taking the monodromy of this flat connection.

Now suppose that $\cG$ is a foliation complementary to $\cF$.  Then the preimage $t^{-1}\cG \subset \cT{\HolF}$
defines a regular foliation on $\HolF$.  The leaves of this foliation are the submanifolds of the form $t^{-1}(\W)$ where $\W \subset \X$ is a leaf of $\cG$.

\begin{lemma}\label{lem:tpull-complement}
Suppose that $\cF$ is a regular foliation of $\X$ such that the map $s : \HolF \to \X$ is proper, and that $\cG$ is a foliation complementary to $\cF$.  Then the following statements hold:
\begin{enumerate}
\item The foliation $t^{-1}\cG \subset \cT{\HolF}$ defines a complete flat Ehresmann connection on  the fibration $s: \HolF \to \X$.
\item If $\bL \subset \X$ is a leaf of $\cF$, then the $t^{-1}\cG$-horizontal lifts of $\bL$ are exactly the fibres $t^{-1}(y)$ for $y \in \bL$.
\item If $x\in \X$ and $\bL$ is the leaf of $\cF$ through $x$, then the monodromy representation $\rho_x : \pi_1(\X,x) \to \Aut{s^{-1}(x)}$ of $t^{-1}\cG$ extends the holonomy representation of $\cF$ at $x$, i.e.~the following diagram commutes:
\[
\xymatrix{
\pi_1(\bL,x) \ar@{->>}[d]_-{\hol_x}\ar[r] & \pi_1(\X,x) \ar[d]^-{\rho_x} \\
\HolF_x \ar@{^(->}[r] & \Aut{s^{-1}(x)}
}
\]
\end{enumerate}
\end{lemma}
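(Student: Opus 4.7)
My plan is to deduce all three assertions from the decomposition $\cT{\X}=\cF\oplus\cG$ combined with the infinitesimal structure of the submersions $s,t:\HolF\to\X$, using the rank identity $\dim\HolF=\dim\X+\rank\cF$ and the immersion property of $(s,t):\HolF\to\X\times\X$ recalled in \autoref{rmk:kahler}. For part (1), I would first check that $\ker(\dd s)$ and $t^{-1}\cG:=(\dd t)^{-1}(\cG)$ are subbundles of $\cT{\HolF}$ whose ranks, respectively $\rank\cF$ and $\rank\cF+\rank\cG=\dim\X$, sum to $\dim\HolF$. The key observation for transversality is that the restriction of $t$ to any $s$-fibre lands in a single leaf of $\cF$, so that $\dd t(\ker\dd s)\subset t^*\cF$; consequently any $v\in\ker(\dd s)\cap(\dd t)^{-1}\cG$ satisfies $\dd t(v)\in\cF\cap\cG=0$ and $\dd s(v)=0$, whence $v=0$ by the immersion property of $(s,t)$. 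This gives $\cT{\HolF}=\ker(\dd s)\oplus t^{-1}\cG$, exhibiting $t^{-1}\cG$ as an Ehresmann connection on $s$ that is flat since $t^{-1}\cG$ is already involutive. Completeness is the standard fact that a flat Ehresmann connection on a proper submersion has the path-lifting property: the horizontal-lift ODE along any $\alpha:[0,1]\to\X$ lives in the compact set $s^{-1}(\alpha([0,1]))$, so the lift exists on all of $[0,1]$.

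For part (2), the inclusion $\ker(\dd t)\subset t^{-1}\cG$ shows that each fibre $t^{-1}(y)$ is tangent to the connection. When $y\in\bL$, the fibre is contained in $s^{-1}(\bL)$ (since the source and target of a groupoid arrow lie on the same leaf of $\cF$) and $s$ restricts to a local biholomorphism $t^{-1}(y)\to\bL$; having the correct dimension $\rank\cF$, it is thus a horizontal lift of $\bL$. Conversely, any connected horizontal submanifold $\widetilde{\bL}\subset s^{-1}(\bL)$ of dimension $\rank\cF$ satisfies $t(\widetilde{\bL})\subset\bL$, so $\dd t(T\widetilde{\bL})\subset\cF$; combined with the horizontality inclusion $\dd t(T\widetilde{\bL})\subset\cG$, this forces $\dd t|_{\widetilde{\bL}}=0$, hence $\widetilde{\bL}\subset t^{-1}(y)$ for some $y$, with equality by dimension.

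Part (3) then drops out. For a loop $\gamma\in\pi_1(\bL,x)$, its image in $\pi_1(\X,x)$ acts on $s^{-1}(x)$ via $\rho_x$ by taking the endpoint of the $t^{-1}\cG$-horizontal lift $\tgamma$ of $\gamma$ starting at a chosen point of $s^{-1}(x)$. Applying the argument of part (2) pointwise along $\tgamma$ shows that $t\circ\tgamma$ is constant; hence $\tgamma$ is the unique lift of $\gamma$ to $\HolF$ with constant $t$-coordinate that projects by $s$ to $\gamma$, which is precisely the horizontal lift for the Ehresmann connection on $s:s^{-1}(\bL)\to\bL$ whose horizontal leaves are the fibres of $t$ — and by definition this is the connection whose monodromy is $\hol_x$. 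The two endpoints therefore coincide, so the diagram commutes. The main obstacle I foresee is the transversality argument in part (1); once $\cT{\HolF}=\ker(\dd s)\oplus t^{-1}\cG$ is in place, parts (2) and (3) are essentially formal unwindings of definitions.
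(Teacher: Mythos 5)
Your proof is correct and follows essentially the same route as the paper: a rank count showing $\ker(\dd s)$ and $t^{-1}\cG$ have complementary ranks, plus a transversality argument exploiting the fact that $s$-fibres, $t$-fibres and $s^{-1}(\bL)$ all map into leaves of $\cF$, so that their tangent images meet $\cG$ trivially. The only cosmetic difference is that you derive $\ker(\dd s)\cap t^{-1}\cG=0$ from the immersion property of $(s,t)$, where the paper instead argues via the surjection of $t^{-1}\cG$ onto the normal bundle of each leaf; both are minor variants of the same idea.
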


\begin{proof}
For the first statement, note that $\rank \cG = \dim \X - \rank \cF$ and the fibres of $t$ have dimension equal to $\rank \cF$.  Therefore $t^{-1}\cG$ has rank equal to $\dim \X$.  Note that $\cG$ is identified with the normal bundle of the foliation $\cF$ and therefore $t^{-1}\cG$ surjects by $s$ onto the normal bundle of every leaf.  Meanwhile every $t$-fibre surjects by $s$ onto the corresponding leaf.  Considering the ranks, it follows that $t^{-1}(\cG)$ is complementary to the fibres of $s$, defining a flat Ehresmann connection.  Since $s$ is proper, this connection is complete, as desired.

For the second statement, note that the horizontal lifts of a leaf $\bL \subset \X$ of $\cF$ are, by definition, given by intersecting the preimage $s^{-1}(\bL)$ with the leaves of $t^{-1}\cG$.  But $t$ projects $s^{-1}(\bL)$ onto $\bL$, which is complementary to $\cG$.  It follows that the intersection of the leaves of $t^{-1}\cG$ with $s^{-1}(\bL)$ are the fibres $t^{-1}(y)$ for $y \in \bL$, as claimed.  The third statement then follows immediately from the description of the holonomy representation \eqref{eq:hol} above.
\end{proof}

\subsection{Suspensions on K\"ahler manifolds}

We are now in a position to prove our main result on complementary foliations.
\begin{theorem}\label{thm:Ehresmann}
Let $\cF$ be a regular foliation on a compact K\"ahler manifold, and suppose that $\cF$ has a compact leaf $\bL \subset \X$ with finite holonomy group.  Then the following statements hold:
\begin{enumerate}\item\label{stmt:suspend} For every foliation $\cG$ complementary to $\cF$, there exists a finite \'etale cover $\phi : \tX \to \X$ such that $(\phi^{-1}\cF,\phi^{-1}\cG)$ is a suspension.
\item\label{stmt:split} If, in addition, the fundamental group of $\bL$ is finite and the universal cover $\bLt$ admits no nonzero holomorphic vector fields, i.e.~$h^0(\bLt,\cT{\bLt})=0$, then we can arrange so that the suspension in statement \ref{stmt:suspend} is trivial, i.e.~there exists a compact K\"ahler manifold $\Y$ and a finite \'etale cover
\[
\phi : \bLt \times \Y \to \X
\]
such that $\phi^{-1}\cF$ and $\phi^{-1}\cG$ are identified with the tangent bundles of the factors $\bLt$ and $\Y$, respectively.
\end{enumerate}

 \end{theorem}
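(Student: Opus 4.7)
The plan is to combine Pereira's global Reeb stability theorem~\cite{Pereira01}, the Ehresmann structure on the holonomy groupoid from \autoref{lem:tpull-complement}, and Lieberman's structure theorem for K\"ahler automorphism groups~\cite{Lieberman78}.

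First I set up the framework.  By Pereira's Reeb stability, every leaf of $\cF$ is compact with finite holonomy, so $s:\HolF\to\X$ is proper and, by \autoref{lem:tpull-complement}, is a holomorphic fibre bundle with connected compact K\"ahler fibre $\bL':=s^{-1}(x)$ (the holonomy cover of $\bL$) equipped with a complete flat Ehresmann connection $t^{-1}\cG$.  Fixing a basepoint, this yields a monodromy representation $\rho:\pi_1(\X,x)\to\Aut{\bL'}$ extending $\hol_x:\pi_1(\bL,x)\twoheadrightarrow\HolF_x$, which by \autoref{rmk:polarized} factors through $\Autom{\omega}{\bL'}$.

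For statement \ref{stmt:suspend}, I would take the finite \'etale cover $\phi:\tX\to\X$ corresponding to the stabilizer $\Gamma:=\{\alpha\in\pi_1(\X,x):\rho(\alpha)(1_x)=1_x\}$ of the unit element $1_x\in\bL'$.  Since $\HolF_x$ acts freely on the $t$-fibre through $1_x$, one checks that $\Gamma\cap\pi_1(\bL,x)=\ker\hol_x$, so the leaf of $\phi^{-1}\cF$ through the distinguished basepoint of $\tX$ is exactly the holonomy cover $\bL'$, whose holonomy in $\tX$ is trivial.  A second application of Reeb stability on the compact K\"ahler cover $\tX$ propagates this to every leaf, making $\phi^{-1}\cF$ a fibration and $(\phi^{-1}\cF,\phi^{-1}\cG)$ a suspension.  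The hard part will be to verify that $\Gamma$ has finite index in $\pi_1(\X,x)$, equivalently that the $\rho$-orbit $O:=\rho(\pi_1(\X,x))\cdot 1_x\subset\bL'$ is finite.  Unwinding the Ehresmann lift, $O$ equals $s^{-1}(x)\cap t^{-1}(\bL\cap L_x)$ in $\HolF$, where $L_x$ is the $\cG$-leaf of $x$ in $\X$; the transversality $\cT{\X}=\cF\oplus\cG$ makes $\bL\cap L_x$ discrete in the compact manifold $\bL$, and combined with Lieberman's bound on the component group of $\Autom{\omega}{\bL'}$, this is expected to force $O$ to be finite.

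For statement \ref{stmt:split}, I would bootstrap from the cover produced above.  The hypothesis that $\pi_1(\bL)$ is finite makes $\bLt\to\bL'$ a finite \'etale cover, so $h^0(\bL',\cT{\bL'})=0$ as well, and hence the identity component of $\Aut{\bL'}$ is trivial.  Combined with Lieberman this forces $\Autom{\omega}{\bL'}$ to be a finite group, so the suspension monodromy from statement \ref{stmt:suspend} has finite image.  Passing to the kernel cover of the base $\Y$ then trivializes the monodromy and yields a further finite \'etale cover $\tX'\cong\bL'\times\Y'$.  A final finite \'etale cover $\bLt\times\Y'\to\bL'\times\Y'$ induced by the universal cover map $\bLt\to\bL'$ on the first factor produces $\bLt\times\Y$ (with $\Y:=\Y'$) as the desired finite \'etale cover of $\X$, with the pulled-back foliations identified with the factor tangent bundles.
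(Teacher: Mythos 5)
Your overall architecture (Reeb stability, the holonomy groupoid with its flat Ehresmann connection, the polarized monodromy representation, and Lieberman for part~\ref{stmt:split}) is the same as the paper's, and your argument for part~\ref{stmt:split} is essentially the published one. But part~\ref{stmt:suspend} has a genuine gap, in two places. First, the subgroup $\Gamma$ you propose --- the stabilizer of the unit $1_x \in s^{-1}(x)$ --- need not have finite index. The orbit of $1_x$ maps under $t$ into $\bL \cap \W$, where $\W$ is the $\cG$-leaf through $x$; this set is discrete in the leaf topology of $\W$, but $\W$ need not be closed in $\X$, so the intersection (hence the orbit) can be infinite, and Lieberman does not save you because the identity component $\Aut{s^{-1}(x)}_0$ can be a positive-dimensional torus acting with dense orbits. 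Concretely, take $\X = \T_1 \times \T_2$ a product of complex tori, $\cF$ the fibres of the projection to $\T_2$, and $\cG$ a generic constant linear distribution complementary to $\cF$: the monodromy acts on $s^{-1}(x) \cong \T_1$ by a dense group of translations, so every point stabilizer has infinite index (even though $(\cF,\cG)$ is already a suspension, with infinite monodromy image --- which is allowed). Second, even when your $\Gamma$ has finite index, killing the holonomy of the single leaf through the basepoint of $\tX$ does not propagate to the other leaves: Pereira's theorem applied on $\tX$ only yields that every leaf is compact with \emph{finite} holonomy, not trivial holonomy. For instance, on $(\bL\times\T)/(\ZZ/2\ZZ)$ with the involution acting freely on $\bL$ and with fixed points on an elliptic curve $\T$, the generic leaf of the induced foliation has trivial holonomy while the leaves over the fixed points have holonomy $\ZZ/2\ZZ$; such a foliation is not a fibration. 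To conclude that $\phi^{-1}\cF$ is a fibration you must trivialize the holonomy of \emph{all} leaves simultaneously.

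The paper's \autoref{lem:finite-index} is designed precisely to do this: using Lieberman's exact sequence $1 \to \N \to \Aut{\Z}_0 \to \T \to 1$ together with the fixed-point property of $\N$ (\autoref{lem:fixed-points}), it produces a finite-index subgroup $\Gamma < \pi_1(\X,x)$ whose image in $\Autom{\omega}{s^{-1}(x)}$ contains \emph{no nontrivial finite subgroup acting freely} on $s^{-1}(x)$. Since the holonomy group of any leaf of $\phi^{-1}\cF$ embeds, after conjugation by parallel transport, as exactly such a subgroup, all holonomy groups upstairs vanish at once. Replacing your stabilizer subgroup by this $\Gamma$ repairs both defects; the remainder of your argument, including part~\ref{stmt:split}, then goes through.
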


As an immediate corollary of part \ref{stmt:suspend} of \autoref{thm:Ehresmann}, we obtain the following special case of Beauville's conjecture:
\begin{corollary}
\autoref{conj:beauville} holds for decompositions of the tangent bundle of the form $\cT{\X} = \cF\oplus \cG$ where $\cF$ has a compact leaf with finite holonomy.
\end{corollary}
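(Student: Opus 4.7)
The plan is to reduce immediately to the suspension case by invoking \autoref{thm:Ehresmann}(\ref{stmt:suspend}), and then to pass to the universal cover. First I would apply \autoref{thm:Ehresmann}(\ref{stmt:suspend}) to the given complementary pair $(\cF,\cG)$ to obtain a finite \'etale cover $\phi : \tX \to \X$ such that $(\phi^{-1}\cF,\phi^{-1}\cG)$ is a suspension, with underlying fibration $f : \tX \to \Y$ and typical fibre $\bL$. By the discussion following \autoref{def:suspension}, this furnishes a canonical identification
\[
\tX \cong (\bL \times \tY)/\pi_1(\Y,y),
\]
where $\tY$ denotes the universal cover of $\Y$ based at some chosen point $y\in\Y$ and $\pi_1(\Y,y)$ acts diagonally via the monodromy representation on $\bL$ and as deck transformations on $\tY$. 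Under this identification, the pullbacks of $\phi^{-1}\cF$ and $\phi^{-1}\cG$ to $\bL\times\tY$ are precisely the tangent bundles of the respective factors.

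Next I would pass to universal covers. Since $\tY$ is simply connected, the universal cover of $\bL\times\tY$ is the product $\bLt\times\tY$, where $\bLt$ is the universal cover of the leaf $\bL$; the product decomposition of the tangent bundle manifestly persists under this further cover. Composing the covering $\bLt\times\tY\to\bL\times\tY$ with the quotient $\bL\times\tY\to\tX$ exhibits $\bLt\times\tY$ as a connected covering space of $\tX$, and since $\bLt\times\tY$ is simply connected it must be the universal cover of $\tX$. Finally, because $\phi:\tX\to\X$ is finite \'etale, the universal cover of $\tX$ coincides with that of $\X$. We therefore obtain an isomorphism from the universal cover of $\X$ onto $\bLt\times\tY$ that carries $\cT{\X}=\cF\oplus\cG$ onto the canonical splitting $\cT{\bLt\times\tY}\cong\cT{\bLt}\oplus\cT{\tY}$, which is exactly the content of \autoref{conj:beauville} for this two-factor decomposition.

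All the substantive geometric input has already been consumed in \autoref{thm:Ehresmann}(\ref{stmt:suspend}), so no genuine obstacle remains at this stage. The only points requiring verification are routine facts from covering space theory: that the quotient $(\bL\times\tY)/\pi_1(\Y,y)$ is genuinely covered by $\bLt\times\tY$ (which uses that $\pi_1(\Y,y)$ acts properly discontinuously by deck transformations and that a composition of covering maps is a covering map), and that a finite \'etale cover does not alter the universal cover. Once these are noted, the corollary follows at once.
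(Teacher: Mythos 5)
Your proposal is correct and follows exactly the route the paper intends: the paper derives this corollary "immediately" from \autoref{thm:Ehresmann}, part \ref{stmt:suspend}, together with the canonical description of a suspension as $(\bL\times\tY)/\pi_1(\Y,y)$ given after \autoref{def:suspension}, and your write-up simply makes the covering-space bookkeeping explicit (the only cosmetic slip being that the typical fibre of $\phi^{-1}\cF$ is a covering space of a leaf of $\cF$ rather than literally $\bL$, which changes nothing since only its universal cover enters).
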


\begin{proof}[Proof of \autoref{thm:Ehresmann}, part \ref{stmt:suspend}]
Suppose that $\X$ is a compact K\"ahler manifold and $\cF$ is a regular holomorphic foliation having a compact leaf with finite holonomy group.    Then by the global Reeb stability theorem for compact K\"ahler manifolds~\cite[Theorem 1]{Pereira01}, \emph{every} leaf of $\cF$ is compact with finite holonomy group.  As observed in \cite[Example 5.28(2)]{Moerdijk2003}, this implies that the submersions  $s,t: \HolF \to \X$ are proper maps.

If $\cG$ is any foliation complementary to $\cF$, we obtain from \autoref{lem:tpull-complement} a flat Ehresmann connection on the fibration  $s :\HolF \to \X$ whose monodromy representation induces the holonomy representation of every leaf.  Let us choose a base point $x \in \X$, and consider the monodromy representation
\[
\rho: \pi_1(\X,x) \to \Aut{s^{-1}(x)}.
\]
Note that by \autoref{rmk:kahler}, $\HolF$ is a K\"ahler manifold.  If we choose a K\"ahler class on $\HolF$, its restriction to $s^{-1}(x)$ gives a K\"ahler class $\omega \in \coH[1,1]{s^{-1}(x)}$ that is invariant under the monodromy action.  Hence $\rho$ factors through the subgroup $\Autom{\omega}{s^{-1}(x)}$ of biholomorphisms of $s^{-1}(x)$ that fix the class $\omega$.

By \autoref{lem:finite-index} below applied to the monodromy action of $\Pi := \pi_1(\X,x)$ on $\Z := s^{-1}(x)$, there exists a finite-index subgroup $\Gamma < \pi_1(\X,x)$ whose image $\rho(\Gamma) < \Autom{\omega}{s^{-1}(x)}$ contains no finite subgroups that act freely on $s^{-1}(x)$.  Let $\phi : \tX \to \X$ be the covering space corresponding to $\Gamma$, with base point $\tx \in \tX$ chosen so that $\phi_*\pi_1(\tX,\tx) = \Gamma$.  Then $\phi$ is a finite \'etale cover since $\Gamma$ has finite index.

We claim that the pair $(\phi^{-1}\cF,\phi^{-1}\cG)$ is a suspension.  Note that to prove this, it suffices to show that the holonomy groups of the leaves of $\phi^{-1}\cF$ are trivial, for in this case the holonomy groupoid of $\phi^*\cF$ embeds as the graph of an equivalence relation in $\tX \times \tX$, which in turn implies that $\phi^{-1}\cF$ is a fibration whose quotient map is proper.  Then the complementary foliation $\phi^{-1}\cG$ is a flat Ehresmann connection, as desired, and this establishes part \ref{stmt:suspend} of the theorem.

To see that the holonomy groups of the leaves of $\phi^{-1}\cF$ are indeed trivial, suppose that $\bLt\subset \tX$ is a leaf of $\phi^{-1}\cF$.  Then $\phi$ restricts to an \'etale cover $\bLt \to \bL$ where $\bL$ is a leaf of $\cF$.  If we choose a base point $\ty \in \bLt$ with image $y = \phi(\ty) \in \bL$, then the germ of the leaf space of $\phi^{-1}\cF$ at $\ty$ is identified with the germ of the leaf space of $\cF$ at $y$, so that the holonomy group of $\bLt$ at $\ty$ is canonically identified with  the image of the composition
\[
\xymatrix{
\pi_1(\bLt,\ty) \ar[r] & \pi_1(\bL,y) \ar[r] & \HolF_y. 
}
\]
Choose a homotopy class of a path $\tgamma$ from $\tx$ to $\ty$, let $\gamma$ be its projection to a homotopy class from $x$ to $y$, and let $\hol_\gamma : s^{-1}(y) \to s^{-1}(x)$ be the isomorphism given by parallel transport of the Ehresmann connection on $\HolF \to \X$.  The adjoint actions of $\tgamma,\gamma$ and $\hol_\gamma$ then fit in a commutative diagram
\[
\xymatrix{
\pi_1(\bLt,\ty) \ar[r]\ar[d]^-{\Ad_{\tgamma}} & \pi_1(\bL,y) \ar[r]\ar[d]^-{\Ad_{\gamma}} & \HolF_y\ar@{-->}[d] \ar@{^(->}[r] & \Aut{s^{-1}(y)} \ar[d]^{\Ad_{\hol_{\gamma}}} \\
\pi_1(\tX,\tx) \cong \Gamma \ar@{^(->}[r] & \pi_1(\X,x) \ar[r] & \Autom{\omega}{s^{-1}(x)} \ar@{^(->}[r] & \Aut{s^{-1}(x)}
}
\]
Note that since $\HolF_y$ acts freely on $s^{-1}(y)$, and the holonomy $\hol_\gamma$ is an isomorphism between the fibres, the conjugate of $\HolF_y$ by $\hol_\gamma$ acts freely on $s^{-1}(x)$.  We conclude that the dashed arrow embeds the holonomy group of $\bLt$ as a finite subgroup in $\Autom{\omega}{s^{-1}(x)}$ that acts freely on $s^{-1}(x)$.  Since the diagram is commutative, this subgroup lies in the image of $\Gamma$ and hence by our choice of $\Gamma$ it must be trivial, as claimed.
\end{proof}

\begin{proof}[Proof of \autoref{thm:Ehresmann}, part \ref{stmt:split}]
By part \ref{stmt:suspend} of the theorem, we may assume without loss of generality that $(\cF,\cG)$ is a suspension with base $\Y$ and typical fibre $\bL'$, where $\bL'$ is a covering space of $\bL$. Moreover the monodromy representation of this suspension takes values in the group $\Autom{\omega'}{\bL'}$ of biholomorphism of $\bL'$ that fix some K\"ahler class $\omega' \in \coH[1,1]{\bL'}$.

Now note that if the fundamental group of $\bL$ is finite and the universal cover of $\bL$ has no nonvanishing vector fields, then $\coH[0]{\bL',\cT{\bL'}}=0$ as well.  Therefore $\Autom{\omega'}{\bL'}$ is finite by \cite[Proposition 2.2]{Lieberman78}.  The kernel of the monodromy representation therefore gives a finite index subgroup of $\pi_1(\Y)$, and passing to the corresponding finite \'etale cover of $\Y$, we trivialize the suspension, giving an \'etale map $\bL'\times \Y \to \X$ that implements the desired splitting of the tangent bundle.  Then passing to the universal cover of $\bL'$, we obtain the desired statement.
\end{proof}

\begin{lemma}\label{lem:finite-index}
Let $\Z$ be a compact K\"ahler manifold with K\"ahler class $\omega \in \coH[1,1]{\Z}$, let $\Pi$ be a finitely generated group, and let $\rho : \Pi \to \Autom{\omega}{\Z}$ be a homomorphism to the group of biholomorphisms of $\Z$ that fix the class $\omega$.  Then there exists a finite-index subgroup $\Gamma < \Pi$ whose image $\rho(\Gamma) < \Autom{\omega}{\Z}$ contains no finite subgroups that act freely on $\Z$.
\end{lemma}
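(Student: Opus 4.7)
The plan is to shrink $\Pi$ in two stages, exploiting Lieberman's structure theory of $G := \Autom{\omega}{\Z}$. First, by \cite{Lieberman78}, $G$ has only finitely many connected components, and its identity component $G^0$ coincides with $\mathrm{Aut}^0(\Z)$ (since any connected group of biholomorphisms acts trivially on cohomology and hence preserves the class $\omega$). Passing to the preimage $\rho^{-1}(G^0)$ therefore costs only a finite index, so we may assume $\rho(\Pi) \subset G^0$.

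Next, I would invoke the Chevalley-type decomposition of $\mathrm{Aut}^0(\Z)$ due to Fujiki and Lieberman: there is a short exact sequence
\[
1 \to K \to G^0 \xrightarrow{q} A \to 1,
\]
where $K$ is a connected linear algebraic group and $A$ is a compact complex torus (a subtorus of the Albanese of $\Z$). The image $q(\rho(\Pi))$ is a finitely generated subgroup of the abelian group $A$ and so has the form $\ZZ^r \oplus (\textrm{finite})$. Taking $\Gamma$ to be the preimage in $\Pi$ of a torsion-free finite-index subgroup of $q(\rho(\Pi))$ then yields a finite-index $\Gamma \subset \Pi$ whose image $\rho(\Gamma)$ has torsion-free projection to $A$; in particular, every element of finite order in $\rho(\Gamma)$ lies in the kernel $K$.

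The remaining step, which I expect to be the main obstacle, is to show that every finite-order element $h \in K$ fixes a point of $\Z$. Once this is established, any finite subgroup $H \subset \rho(\Gamma)$ automatically sits inside $K$, each non-identity element of $H$ has a fixed point on $\Z$, and so $H$ cannot act freely on $\Z$, as required. Since $h$ has finite order it is semisimple, and hence is contained in a maximal algebraic torus $S \cong (\CC^*)^d \subset K$, so the task reduces to exhibiting an $S$-fixed point on $\Z$. I would argue by induction on $d$: for $d=1$, a non-trivial $\CC^*$-orbit in $\Z$ cannot be closed (it would be biholomorphic to $\CC^*$, contradicting compactness of $\Z$), so its closure in the compact space $\Z$ must contain boundary points, which are necessarily $\CC^*$-fixed; for the inductive step, a codimension-one subtorus $S' \subset S$ has a non-empty fixed locus $\Z^{S'}$ by the inductive hypothesis, which is a compact K\"ahler submanifold (smooth because the $S'$-action linearizes at fixed points, K\"ahler via restriction of an $S'$-invariant K\"ahler metric obtained by averaging over the maximal compact subtorus), and the quotient $S/S' \cong \CC^*$ acts on it, so the $d=1$ case furnishes the required $S$-fixed point.
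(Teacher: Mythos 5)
Your reductions track the paper's proof almost exactly: pass to the neutral component $\Aut{\Z}_0$ (finite index in $\Autom{\omega}{\Z}$ by Lieberman), project to the compact torus quotient, choose $\Gamma$ so that its image there is torsion-free, and conclude that every finite subgroup of $\rho(\Gamma)$ lands in the kernel of the projection. All of that matches the paper. The divergence, and the gap, is in the last step, where you must show that finite-order elements of the kernel have fixed points.

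Your base case $d=1$ rests on two claims that are false for general holomorphic $\CC^*$-actions on compact K\"ahler manifolds. First, a closed one-dimensional orbit need not be biholomorphic to $\CC^*$: the stabilizer of a point is a closed complex subgroup of $\CC^*$, and these include the infinite discrete subgroups $q^{\ZZ}$ with $|q|\neq 1$, so a closed orbit can be an elliptic curve $\CC^*/q^{\ZZ}$ --- compact, with no contradiction. Second, even when an orbit is not closed, the boundary of its closure need not consist of fixed points; that is a feature of \emph{algebraic} actions (orbits locally closed, boundaries invariant of strictly smaller dimension), not of arbitrary holomorphic ones. Concretely, a nontrivial homomorphism $\CC^*\to\T$ into a compact complex torus, composed with the translation action, gives a fixed-point-free $\CC^*$-action on a compact K\"ahler manifold, so the general statement you are relying on is simply wrong. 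Such actions do not lie in the linear part $K$, so your conclusion is true in your setting, but nothing in your argument actually uses that $S$ sits inside $K$; to make the orbit-closure reasoning legitimate you would need the Fujiki--Lieberman meromorphic structure of the $K$-action, which you assert exists but never invoke. The paper sidesteps all of this: Lieberman identifies the kernel $\N$ as the subgroup exponentiating the Lie algebra of vector fields with nonempty zero locus, and \autoref{lem:fixed-points} shows that \emph{every} element of $\N$ (not only the finite-order ones) has a fixed point, because the set of elements admitting a fixed point is $p(a^{-1}(\Delta))$, a closed analytic subvariety of $\N$ by the Grauert direct image theorem, which contains the image of the exponential map and hence all of the connected group $\N$. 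If you want to keep your torus induction, the quickest repair of the base case is to observe that a one-parameter subgroup of the kernel is generated by a vector field with a zero, and any such zero is fixed by the entire subgroup.
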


\begin{proof}
By taking preimages under $\rho$, we reduce the problem to the case where $\rho$ is injective, so we may assume without loss of generality that $\Pi < \Autom{\omega}{\Z}$ is a subgroup.  Moreover, by \cite[Proposition 2.2]{Lieberman78}, the neutral component $\Aut{\Z}_0$ of the group of all biholomorphisms of $\Z$ is a finite index subgroup in $\Autom{\omega}{\Z}$.   In particular, $\Pi \cap \Aut{\Z}_0$ has finite index in $\Pi$, and is therefore finitely generated by Schreier's lemma.  Hence we may assume without loss of generality that $\Pi < \Aut{\Z}_0$.

By \cite[Theorems 3.3, 3.12 and 3.14]{Lieberman78}, there is an exact sequence
\begin{align}
\xymatrix{
1 \ar[r] & \N \ar[r] & \Aut{\Z}_0 \ar[r] & \T \ar[r] & 1
}\label{eq:lieberman-exact}
\end{align}
where $\N$ is the closed subgroup exponentiating the Lie algebra of holomorphic vector fields with nonempty zero locus, and $\T$ is a compact complex torus (a finite connected \'etale cover of the Albanese torus of $\Z$).

Since $\Pi$ is finitely generated, its image in $\T$ is a finitely generated abelian group.  Therefore, by the classification of finitely generated abelian groups, there is a finite-index subgroup $\Gamma < \Pi$ whose image in $\T$ is torsion-free.  Suppose that $\G < \Gamma$ is a finite subgroup that acts freely on $\Z$.  We claim that $\G$ is trivial.  Indeed, by construction, the image of $\G$ in $\T$ is a torsion-free finite group, hence trivial. We must therefore have that $\G < \N$.  But by \autoref{lem:fixed-points} below, every element of $\N$ has a fixed point, and therefore the only subgroup of $\N$ that acts freely is the trivial subgroup.
\end{proof}

\begin{lemma}\label{lem:fixed-points}
Let $\N$ be a connected complex Lie group, and let $\X$ be a compact complex manifold.  Let $a : \N \times \X \to \X$ be an action of $\N$ on $\X$ by biholomorphisms.  Suppose that the vector fields generating the action all have a non-empty vanishing locus.  Then every element of $\N$ has a fixed point in $\X$.
\end{lemma}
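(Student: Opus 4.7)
The plan is to reduce the statement to the straightforward observation that if $g \in \N$ can be written as $g = \exp(X)$ for some $X$ in the Lie algebra $\mathfrak{n}$ of $\N$, then $g$ has a fixed point on $\X$.  Indeed, the hypothesis provides a zero $p \in \X$ of $X$, and then the entire holomorphic one-parameter subgroup $\{\exp(tX) : t \in \CC\} \subset \N$ fixes $p$; in particular $g(p) = p$.

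It therefore suffices to show that the exponential map $\exp \colon \mathfrak{n} \to \N$ is surjective.  This is a standard fact about connected complex Lie groups that I would prove via the Jordan decomposition: any $g \in \N$ decomposes as $g = g_s g_u$ with commuting semisimple $g_s$ and unipotent $g_u$.  One then has $g_u = \exp(N)$ for a nilpotent $N \in \mathfrak{n}$, while the semisimple part $g_s$ lies in a maximal complex torus $T \subset \N$ and so equals $\exp(S)$ for some $S$ in the Lie algebra of $T$, which is a complex subalgebra of $\mathfrak{n}$.  Since $[S, N] = 0$, the Baker--Campbell--Hausdorff formula collapses to give $g = \exp(S + N)$ with $S + N \in \mathfrak{n}$, as required.

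The main obstacle is that the Jordan decomposition requires some algebraic structure on $\N$, which is automatic in the setting where this lemma is applied --- there $\N$ is a subgroup of the automorphism group of a compact K\"ahler manifold coming from the exact sequence \eqref{eq:lieberman-exact}, and so carries an algebraic structure --- but is not assumed in the statement.  In full generality, one may instead appeal to the Lefschetz fixed point theorem: since $\N$ is path connected, any $g \in \N$ is isotopic to the identity through biholomorphisms of $\X$, whence $g$ acts trivially on the singular cohomology of $\X$ and its Lefschetz number equals $\chi(\X)$.  This directly yields a fixed point whenever $\chi(\X) \ne 0$, and the residual case $\chi(\X) = 0$ is handled by verifying the exponential surjectivity separately using the rigidity imposed by the hypothesis that every element of $\mathfrak{n}$ has a nonempty zero locus.
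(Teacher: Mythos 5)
Your opening observation is correct and is exactly where the paper's proof also starts: if $g=\exp(X)$ with $X\in\mathfrak{n}$, then any zero $p$ of the generating vector field attached to $X$ is fixed by the whole one-parameter subgroup, hence by $g$. The gap is in the reduction: the exponential map of a connected complex Lie group is \emph{not} surjective in general. The standard counterexample is $\mathrm{SL}_2(\CC)$, where $\begin{pmatrix}-1&1\\0&-1\end{pmatrix}$ is not an exponential, and this example also pinpoints the flaw in your Jordan-decomposition argument: from $g_s g_u=g_u g_s$ you only get $\Ad(g_s)N=N$, i.e.\ $\exp(\mathrm{ad}\,S)N=N$, which does not force $[S,N]=0$ (the eigenvalues of $\mathrm{ad}\,S$ on $N$ can be nonzero multiples of $2\pi i$), so Baker--Campbell--Hausdorff does not collapse. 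Your fallback via Lefschetz handles only $\chi(\X)\neq 0$ and does not use the hypothesis on vanishing loci at all, while the case $\chi(\X)=0$ --- which is precisely where that hypothesis must enter, as the example of a torus acting on itself by translations shows --- is left as an unproved assertion. So as written the proof is incomplete.

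The paper's proof avoids surjectivity of $\exp$ entirely: it only needs that the image of $\exp$ contains a neighbourhood of the identity. Let $\U\subset\N$ be the set of elements with at least one fixed point; then $\U=p(a^{-1}(\Delta))$ where $\Delta\subset\X\times\X$ is the diagonal and $p:\N\times\X\to\N$ is the projection. Since $a$ is holomorphic, $a^{-1}(\Delta)$ is a closed analytic subset, and since $\X$ is compact, $p$ is proper, so by the Grauert direct image (proper mapping) theorem $\U$ is a closed analytic subvariety of $\N$. By your first observation $\U$ contains $\exp(\mathfrak{n})$, hence an open neighbourhood of the identity, so $\dim\U=\dim\N$ and $\U=\N$ by connectedness. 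If you want to salvage your own argument, this is the missing ingredient; the group-theoretic route cannot work without further hypotheses on $\N$.
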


\begin{proof}
Let $\U\subset \N$ be the set of elements with at least one fixed point. Note that $\U = p(a^{-1}(\Delta))$, where $\Delta \subset \X\times \X$ is the diagonal and $p : \N \times \X \to \N$ is the projection.  Since $a$ is holomorphic, $a^{-1}(\Delta) \subset \N \times \X$ is a closed analytic subspace.  Since $p$ is proper, the Grauert direct image theorem  implies that the image $p(a^{-1}(\Delta)) = \U \subset \N$ is a closed analytic subvariety.  But we assume that every generating vector field for the action has at least one zero; therefore $\U$ contains the image of the exponential map of $\N$, and in particular it contains an open neighbourhood of the identity.  It follows that $\dim \U = \dim\N$, and therefore $\U = \N$ since $\N$ is connected.
\end{proof}

\subsection{Foliations with trivial canonical class}

In the particular case when the foliation $\cF$ or its compact leaf $\bL$ has trivial canonical class, we can strengthen the results of the previous section.  For example, we have the following:
\begin{corollary}\label{cor:split-CY-leaf}
Suppose $\X$ is a compact K\"ahler manifold, and $\cF \subset \cT{\X}$ is a regular foliation having a compact leaf $\bL$ with finite fundamental group and trivial canonical class $c_1(\bL) = 0$.  If $\cG$ is any complementary foliation, then there exists a finite \'etale cover $\phi : \bLt \times \Y \to \X$ inducing the splitting of the tangent bundle of $\X$ as in \autoref{thm:Ehresmann}, part \ref{stmt:split}.
\end{corollary}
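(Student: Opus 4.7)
The plan is to reduce to \autoref{thm:Ehresmann}, part \ref{stmt:split}; the only hypothesis of that theorem not already provided by the corollary's assumptions is the vanishing $h^0(\bLt, \cT{\bLt}) = 0$, which I will establish using the Beauville--Bogomolov decomposition theorem. Since $\pi_1(\bL)$ is finite, the universal cover $\bLt \to \bL$ is a finite \'etale map, so $\bLt$ is compact K\"ahler (inheriting a K\"ahler structure from $\bL$, which is K\"ahler as a compact complex submanifold of $\X$) with vanishing first Chern class, as $c_1$ pulls back under \'etale covers.

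The Beauville--Bogomolov decomposition theorem then applies to $\bLt$: being a simply connected compact K\"ahler manifold with $c_1 = 0$, it decomposes as a finite product
\[
\bLt \;\cong\; \prod_{i} \Y_i \;\times\; \prod_{j} \Z_j
\]
of strict Calabi--Yau manifolds $\Y_i$ and irreducible hyperk\"ahler manifolds $\Z_j$, with no flat-torus factor (which would contradict simple-connectedness).

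It remains to show that $h^0(M,\cT{M}) = 0$ for each factor $M$ in this product. The trivialization of $K_M$ provides an isomorphism $\cT{M} \cong \forms[n_M-1]{M}$, where $n_M = \dim M$, so this reduces to checking the vanishing of the Hodge number $h^{n_M-1, 0}(M)$. For a strict Calabi--Yau this holds by definition; for an irreducible hyperk\"ahler of (even) dimension $n_M = 2m$, it holds because $n_M - 1 = 2m-1$ is odd and the $(p,0)$-Hodge numbers of such manifolds vanish in odd degrees. A K\"unneth-type decomposition $\cT{\bLt} \cong \bigoplus_k \mathrm{pr}_k^* \cT{M_k}$ then yields $h^0(\bLt, \cT{\bLt}) = 0$, so \autoref{thm:Ehresmann}, part \ref{stmt:split}, applies and delivers the desired \'etale cover $\phi : \bLt \times \Y \to \X$. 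I do not expect any step to be especially hard; the only nontrivial input is the Beauville--Bogomolov theorem in the K\"ahler setting, which is classical.
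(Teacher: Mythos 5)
Your proof is correct, and the reduction to \autoref{thm:Ehresmann}, part \ref{stmt:split}, is exactly the paper's strategy: the only thing to check is $h^0(\bLt,\cT{\bLt})=0$. Where you differ is in how you establish that vanishing. You invoke the Beauville--Bogomolov decomposition of $\bLt$ into strict Calabi--Yau and irreducible hyperk\"ahler factors and check $h^{n_M-1,0}(M)=0$ factor by factor; this works (the absence of a torus factor follows from simple connectivity, and the K\"unneth step is fine), but it brings in a deep structure theorem where none is needed. The paper instead gives a one-line Hodge-theoretic computation: from the trivialization $\cT{\bLt}\cong\forms[n-1]{\bLt}$ one gets
\[
h^0(\bLt,\cT{\bLt}) = h^0(\bLt,\forms[n-1]{\bLt}) = h^{n-1}(\bLt,\cO{\bLt}) = h^1(\bLt,\cO{\bLt}) = \tfrac{1}{2}\dim\coH[1]{\bLt;\CC} = 0
\]
by Hodge symmetry, Serre duality (using again that the canonical bundle is trivial), the Hodge decomposition, and simple connectivity of $\bLt$. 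Your route has the mild virtue of making visible exactly which building blocks of $\bLt$ can occur, which is the perspective the paper does adopt later (in \autoref{C:split1}) when the fundamental group of the leaf is infinite and the torus factor genuinely intervenes; but for the statement at hand the elementary argument is both shorter and self-contained.
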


\begin{proof}
By \autoref{thm:Ehresmann}, part \ref{stmt:split}, it suffices to show that $h^0(\bLt,\cT{\bLt})=0$ where $\bLt$ is the universal cover of $\bL$, but this vanishing is well known.  Indeed, in this case, the canonical bundle of $\bLt$ is trivial, so that $\cT{\bLt} \cong \forms[n-1]{\bLt}$ where $n = \dim \bL$. We then have
\[
h^{0}(\bLt,\cT{\bLt}) = h^0(\bLt,\forms[n-1]{\bLt}) = h^{n-1}(\bLt,\cO{\bLt}) = h^1(\bLt,\cO{\bLt}) = \tfrac{1}{2}\dim \coH[1]{\bLt;\CC} = 0
\]
by Hodge symmetry, Serre duality, the Hodge decomposition theorem and the simple connectivity of $\bLt$.
\end{proof}

This corollary, in turn has consequences for the foliation itself:
\begin{corollary}
Suppose that $\X$ is a compact K\"ahler manifold and that $\cF$ is a (possibly singular) foliation with trivial canonical class $c_1(\cF) = 0$.  If $\cF$ has a compact leaf $\bL$ with finite fundamental group, then there exists a finite \'etale cover $\phi : \bLt \times \Y \to \X$ such that $\phi^{-1}\cF = \cT{\bLt}$.
\end{corollary}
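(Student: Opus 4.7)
The plan is to reduce the statement to \autoref{cor:split-CY-leaf} by establishing (a) that the leaf $\bL$ has trivial canonical class, (b) that the foliation $\cF$ is actually regular, and (c) that $\cF$ admits a holomorphic complementary foliation $\cG \subset \cT{\X}$.

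Point (a) is immediate. Since $\bL$ is a leaf it lies in the smooth locus of $\cF$, so $K_\bL = K_\cF|_\bL$, and the hypothesis $c_1(\cF)=0$ yields $c_1(\bL)=0$. Combined with $\pi_1(\bL)$ finite, the Beauville--Bogomolov decomposition theorem gives that the universal cover $\bLt$ has trivial canonical bundle and decomposes as a product of irreducible Calabi--Yau and hyperk\"ahler factors, in particular with $h^0(\bLt,\cT{\bLt})=0$, matching the hypothesis of \autoref{cor:split-CY-leaf}.

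For (b), observe that $\pi_1(\bL)$ finite forces the holonomy group of $\bL$ to be finite, so local Reeb stability provides a saturated open neighbourhood of $\bL$ in $\X \setminus \mathsf{Sing}(\cF)$ on which $\cF$ is regular with all leaves compact and finite \'etale over $\bL$. To promote this to a global statement, I would combine the global Reeb stability theorem of \cite{Pereira01} with the constraint imposed by $c_1(\cF)=0$, invoking a structural result for foliations with numerically trivial canonical class on compact K\"ahler manifolds to conclude that $\mathsf{Sing}(\cF) = \emptyset$ and every leaf is compact with finite holonomy. For (c), I would then exploit the triviality of $c_1(\cF)$ together with the rigidity of the Calabi--Yau leaves (concretely, via a leafwise Ricci-flat structure in a prescribed K\"ahler class, or via the canonical transverse Hermitian structure furnished by the numerically trivial canonical class) to produce an involutive complement $\cG$ with $\cT{\X} = \cF \oplus \cG$. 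Applying \autoref{cor:split-CY-leaf} to the pair $(\cF,\cG)$ then yields the required finite \'etale cover $\phi : \bLt \times \Y \to \X$ with $\phi^{-1}\cF = \cT{\bLt}$.

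The main obstacle is the combined step (b)--(c): in the possibly singular setting with only $c_1(\cF)=0$ assumed, producing both the regularity of $\cF$ and a holomorphic involutive complement $\cG$ lies outside the immediate scope of the K\"ahler--Poisson arguments developed earlier in the paper, and requires invoking external structural theorems for foliations with numerically trivial canonical class. Once $(\cF,\cG)$ is known to be a pair of complementary regular foliations, the conclusion follows cleanly by direct application of \autoref{cor:split-CY-leaf}.
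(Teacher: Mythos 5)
Your overall strategy---reduce to \autoref{cor:split-CY-leaf} by showing (a) $c_1(\bL)=0$, (b) $\cF$ is regular, and (c) $\cF$ admits a holomorphic involutive complement---is exactly the paper's route, and step (a) is handled correctly (indeed $\cT{\bL}=\cF|_\bL$ on a leaf, so $c_1(\bL)=c_1(\cF)|_\bL=0$; you do not even need Beauville--Bogomolov here, since \autoref{cor:split-CY-leaf} derives $h^0(\bLt,\cT{\bLt})=0$ on its own from finiteness of $\pi_1(\bL)$ and triviality of the canonical class).

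The gap is in (b) and (c), which you acknowledge but do not close, and the substitutes you sketch would not work. Regularity of $\cF$ and, more seriously, the existence of a \emph{holomorphic} involutive complement $\cG$ with $\cT{\X}=\cF\oplus\cG$ do not follow from local/global Reeb stability plus a leafwise Ricci-flat or transversely Hermitian structure: an orthogonal complement to $\cF$ with respect to a Hermitian metric is in general only a $C^\infty$ distribution, not a holomorphic subbundle, and its involutivity is not automatic. The paper fills precisely this hole with a single citation: by \cite[Theorem 5.6]{Loray2018}, a possibly singular foliation with $c_1(\cF)=0$ on a compact K\"ahler manifold admitting a compact leaf with finite holonomy (which follows here since the holonomy group is a quotient of the finite group $\pi_1(\bL)$) is automatically regular and admits a complementary foliation. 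That is the external structural theorem your argument needs; without naming it, or reproving it, steps (b)--(c) remain unestablished. Once it is invoked, your application of \autoref{cor:split-CY-leaf} is correct and completes the proof exactly as in the paper.
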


\begin{proof}
By \cite[Theorem 5.6]{Loray2018}, $\cF$ is automatically regular and admits a complementary foliation, so \autoref{cor:split-CY-leaf} applies.
\end{proof}

In the situation where  $c_1(\cF) =0$ but the compact leaf $\bL$ has infinite fundamental group, the situation becomes more complicated.  However, the Beauville--Bogomolov decomposition theorem~\cite{Beauville1983,Bogomolov1974} implies that $\bL$ has a finite \'etale cover of the form $\Z \times \T$, where $\Z$ is a simply connected compact K\"ahler manifold with $c_1(\Z) = 0$, and $\T$ is a compact complex torus.  Moreover, amongst all such coverings there is a ``minimal'' one through which all others factor.  This minimal split covering is  unique up to a non-unique isomorphism~\cite[Proposition 3]{Beauville82}.   By exploiting this result we can split the leaves of $\cF$ in a uniform fashion:

\begin{proposition}\label{C:split1}
Let $\X$ be a compact K\"{a}hler manifold and let $\cF$ be a possibly singular foliation on $\X$ with $c_1(\cF)=0$. If $\bL\subset \X$ is a compact leaf whose holonomy group is finite, then there exists a simply connected compact K\"ahler manifold $\Z$ with $c_1(\Z)=0$,  a compact complex torus $\T$, a locally trivial fibration $f\colon \W \to \Y$ between complex K\"ahler manifolds with typical fibre $\T$, and a finite \'etale cover $$\phi\colon \Z \times \W \to \X$$
such that $\phi^{-1}\cF$ is given by the fibres of the natural morphism $\Z \times \W \to \Y$ induced by $f$. Moreover if $\cG$ is any foliation complementary to $\cF$, then we may choose $\phi$ so that $\phi^{-1}\cG$ is the pullback of a flat Ehresmann connection on $f$.
\end{proposition}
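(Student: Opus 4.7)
Our plan is to decompose the typical fibre of the suspension from \autoref{thm:Ehresmann} using the Beauville--Bogomolov theorem, and then to transport the decomposition upstairs by controlling the monodromy. First, exactly as in the preceding corollary, \cite[Theorem 5.6]{Loray2018} ensures that $\cF$ is regular and admits a complementary foliation, so we may fix the complementary $\cG$ given in the statement. \autoref{thm:Ehresmann}, part \ref{stmt:suspend}, lets us assume (after a finite \'etale cover of $\X$) that $(\cF,\cG)$ is a suspension over a compact K\"ahler base $\Y_0$, with typical fibre a compact K\"ahler manifold $\bL'$ (a finite \'etale cover of $\bL$, hence still with $c_1(\bL') = 0$), and with monodromy representation $\rho \colon \pi_1(\Y_0, y_0) \to \Autom{\omega}{\bL'}$ preserving a polarization $\omega$ inherited from $\X$.

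The Beauville--Bogomolov theorem then gives a minimal splitting cover $q \colon \Z \times \T \to \bL'$, where $\Z$ is simply connected compact K\"ahler with $c_1(\Z) = 0$, $\T$ is a compact complex torus, and the deck group $\G := \ker q$ is finite. By its uniqueness up to non-unique isomorphism~\cite[Proposition 3]{Beauville82}, every element of $\Aut{\bL'}$ lifts to $\Aut{\Z \times \T}$, and the set of all lifts forms a subgroup $\widetilde{A}$ fitting in an extension $1 \to \G \to \widetilde{A} \to \Aut{\bL'} \to 1$. Since $\Z$ is simply connected with $c_1(\Z) = 0$, Hodge symmetry gives $h^0(\Z, \Omega^1_\Z) = 0$, and the calculation in \autoref{cor:split-CY-leaf} gives $h^0(\Z, \cT{\Z}) = 0$; a standard rigidity argument then shows that every biholomorphism of $\Z \times \T$ preserves each factor, so $\Aut{\Z \times \T} = \Aut{\Z} \times \Aut{\T}$, and the $\Z$-projection of $\widetilde{A}$ lands in $\Autom{\omega_\Z}{\Z}$, which is finite by~\cite[Proposition 2.2]{Lieberman78}.

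The heart of the argument is then to lift $\rho$ to $\widetilde{A}$ by a chain of finite-index reductions of $\pi_1(\Y_0)$---each corresponding to a finite \'etale cover of $\Y_0$---and to arrange that the lift lies in $\{1\} \times \Aut{\T}$. First, passing to the kernel of the composition $\pi_1(\Y_0) \to \Aut{\bL'} \to \Aut{\Z}/\G_\Z$ (a finite target, where $\G_\Z$ is the image of $\G$ in $\Aut{\Z}$) gives a cover over which $\rho$ admits lifts with trivial $\Z$-component; the residual ambiguity then lies in $\G_0 := \ker(\G \to \Aut{\Z}) \subset \Aut{\T}$, yielding a well-defined homomorphism into $\Aut{\T}/\G_0$. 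Using the decomposition $\Autom{\omega_\T}{\T} = \T \rtimes F$ with $F$ finite, a second finite-index reduction lands this homomorphism in the continuous part, i.e., in the isogeny quotient $\T/(\T \cap \G_0)$. A final finite \'etale cover ensuring torsion-free abelianization of $\pi_1$ lets us lift the map through this isogeny by choosing preimages of generators, giving the desired homomorphism $\tilde\rho \colon \pi_1(\Y', y') \to \{1\} \times \Aut{\T}$.

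The resulting finite \'etale cover of $\X$ is then the suspension $\X' = \Z \times \W$ with $\W := (\T \times \widetilde{\Y'})/\pi_1(\Y', y')$, a locally trivial $\T$-bundle over $\Y'$ via the natural map $f \colon \W \to \Y'$. Both $\Y'$ and $\W$ are K\"ahler (as a finite cover of the K\"ahler $\Y_0$, and as the submanifold $\{z_0\} \times \W$ of the K\"ahler manifold $\X'$, respectively); by construction $\phi^{-1}\cF$ consists of the fibres of the composition $\Z \times \W \to \Y'$, and $\phi^{-1}\cG$ coincides with the pullback of the flat Ehresmann connection on $f$ inherited from the suspension structure. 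The main obstacle is the multi-step lifting in the third paragraph, which requires carefully separating the finite ``component-group'' contributions (handled by kernels of maps to finite groups) from the continuous torus contribution (handled by the torsion-free abelianization argument).
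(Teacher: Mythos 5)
Your argument is correct in substance, but it takes a genuinely different route through the core of the proof. The common part is the opening reduction: \cite[Theorem 5.6]{Loray2018} gives regularity and a complementary foliation, and \autoref{thm:Ehresmann} part \ref{stmt:suspend} reduces to a suspension with polarized monodromy. From there the paper proceeds geometrically in two stages: since every automorphism of $\Z \times \T$ preserves the product decomposition, the splitting $\cT{\Z}\oplus\cT{\T}$ descends to the fibre and spreads over the locally trivial fibration to give $\cF = \cF_\Z \oplus \cF_\T$; the $\Z$-factor is then split off by applying \autoref{cor:split-CY-leaf} to $\cF_\Z$, and only the residual torus-fibred suspension is attacked by monodromy lifting (\autoref{lem:torus-lift}, whose inputs are Lichnerowicz's theorem that $\Aut{\bL}_0$ is a torus and the abelianness of the group of lifts). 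You instead lift the entire monodromy representation in one pass through the extension $1 \to \G \to \widetilde{A} \to \Aut{\bL'} \to 1$ determined by the minimal split cover, via three finite-index reductions; this is a legitimate alternative that bypasses \autoref{cor:split-CY-leaf} and Lichnerowicz at the cost of more delicate group theory. The trade is fair, but several points need tightening for the argument to be airtight: (a) the quotients you denote $\Aut{\T}/\G_0$ and $\T/(\T\cap\G_0)$ only make sense as quotients of the subgroup $H \le \{1\}\times\Aut{\T}$ of lifts with trivial $\Z$-component of $\rho(K)$, where $\G_0 = \G \cap H$ is normal in $H$ because $\G$ is normal in $\widetilde{A}$, and the relevant abelian group is $(H\cap\T)\G_0/\G_0 \cong (H\cap\T)/(H\cap\T\cap\G_0)$ rather than $\T/(\T\cap\G_0)$; (b) the $\Z$-projection of $\widetilde{A}$ is finite only after restricting to lifts of $\Autom{\omega}{\bL'}$ --- which suffices, since the monodromy is polarized, but should be said; (c) the ``torsion-free abelianization'' step should be phrased as passing to a finite-index subgroup whose image in the finitely generated abelian group $(H\cap\T)\G_0/\G_0$ is free abelian and then lifting generators through the finite quotient by $H\cap\T\cap\G_0$ --- exactly the trick underlying \autoref{lem:torus-lift}; and (d) a word is needed (as in the paper's statement) on why the leaf space $\Y_0$, hence $\Y'$ and $\W$, is K\"ahler. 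None of these is a gap in the idea; they are places where the bookkeeping must be made precise.
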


\begin{proof}
By \cite[Theorem 5.6]{Loray2018}, $\cF$ is regular and there exists a foliation $\cG$ complementary to $\cF$. Then by \autoref{thm:Ehresmann} part \ref{stmt:suspend}, we may assume without loss of generality that $(\cF,\cG)$ is a suspension with base $\Y$ and typical fibre $\bL$.

 Let $\Z \times \T \to \bL$ be the minimal split cover of $\bL$ as in \cite[Section 3]{Beauville82}.  Since every automorphism of $\Z \times \T$ respects the product decomposition~\cite[Section 3, Lemma]{Beauville82}, the foliations on $\Z \times \T$ given by the tangent bundles of the factors descend to canonical foliations on $\bL$ that split the tangent bundle $\cT{\bL}$. Then, since $f$ is a locally trivial fibration, we obtain a decomposition $\cF \cong \cF_\Z \oplus \cF_\T$ into involutive subbundles with compact leaves.  Note that by construction, the leaves of $\cF_\Z$ are finite quotients of $\Z$ and therefore $\cF_\Z$ satisfies the hypotheses of \autoref{cor:split-CY-leaf}.  Hence by passing to an \'etale cover, we may assume that $\X = \Z \times \W$, so that $\cF_{\Z}$ is identified with the tangent bundle of $\Z$, and $\cF_{\T} \oplus \cG$ is identified with the tangent bundle of $\W$.

This reduces the problem to the case in which $\cF_\Z = 0$, or equivalently $\bL$ is a finite quotient of a torus $\T$, and $\cF$ is the suspension of a representation
\[
\rho : \pi_1(\Y,y) \to \Autom{\omega_\bL}{\bL} 
\]
for some K\"ahler class $\omega_\bL \in \coH[1,1]{\bL}$ and some base point $y \in \Y$.  Let $\omega \in \coH[1,1]{\T}$ be the induced K\"ahler class on $\T$.  By \autoref{lem:torus-lift} below, there exists a finite index subgroup  $\Gamma < \pi_1(\Y,y)$ such that $\rho|_\Gamma$ lifts to a homomorphism 
\[
\widetilde \rho : \Gamma \to \Autom{\omega}{\T}
\]
Let $\tY \to \Y$ be the covering determined by $\Gamma$, and let $\tX \to \tY$ be the suspension determined by $\widetilde \rho$.  Then we have a natural \'etale cover $\tX \to \X$ lifting $\tY \to \Y$ whose restriction to each fibre corresponds to the quotient map $\T \to \bL$, giving the result.
\end{proof}

\begin{lemma}\label{lem:torus-lift}
Let $\bL$ be a compact K\"ahler manifold equipped with a finite \'etale cover $\T \to \bL$, where $\T$ is a compact complex torus.  Let $\omega \in \coH[1,1]{\bL}$ be any K\"ahler class.  If $\Pi$ is a finitely generated group and $\rho : \Pi \to \Autom{\omega}{\bL}$ is a homomorphism, then there exists a finite-index subgroup $\Gamma < \Pi$ whose action on $\bL$ lifts to an action on $\T$.
\end{lemma}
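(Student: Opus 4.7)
\emph{Proof plan.} The argument proceeds in three steps, leveraging Lieberman's structure theory of $\Autom{\omega}{\bL}$.

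First, by \cite[Proposition 2.2]{Lieberman78}, the neutral component $\Aut{\bL}_0$ has finite index in $\Autom{\omega}{\bL}$, so replacing $\Pi$ by $\rho^{-1}(\Aut{\bL}_0)$ we may assume $\rho(\Pi) \subset \Aut{\bL}_0$. We next argue that $\Aut{\bL}_0$ is itself a compact complex torus, by showing that in the exact sequence \eqref{eq:lieberman-exact} attached to $\bL$, the normal subgroup $\N$ is trivial. Indeed, any holomorphic vector field on $\bL$ pulls back along the étale cover $p : \T \to \bL$ to a holomorphic vector field on the torus $\T$; such a vector field is translation-invariant, so it either vanishes identically or is nowhere zero, and the same dichotomy transfers to $\bL$ since $p$ is a surjective submersion.

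Second, we construct a finite cover of $\Aut{\bL}_0$ whose tautological action on $\bL$ lifts to an action on $\T$. Consider the closed complex Lie subgroup
\[
\widetilde\Aut{\bL}_0 := \bigl\{\,(f, \tilde f) \in \Aut{\bL}_0 \times \Aut{\T} \ : \ p \circ \tilde f = f \circ p\,\bigr\}.
\]
Since $\Aut{\bL}_0$ is connected and $p$ is étale, path-lifting shows that the first projection $\widetilde\Aut{\bL}_0 \to \Aut{\bL}_0$ is surjective, with kernel equal to the finite group of deck transformations of $p$. Hence $\widetilde\Aut{\bL}_0$ is compact, and its neutral component $\mathsf{A}$ is a connected compact complex Lie group with abelian Lie algebra, i.e.\ a complex torus; moreover the first projection restricts to a finite étale cover $\mathsf{A} \to \Aut{\bL}_0$. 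The second projection restricts to a morphism of complex tori $\mathsf{A} \to \Aut{\T}_0 = \T$, and by construction the resulting translation action of $\mathsf{A}$ on $\T$ covers the action of $\Aut{\bL}_0$ on $\bL$ along $p$.

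Third, we lift $\rho$. Since $\Aut{\bL}_0$ is abelian, $\rho$ factors through the abelianization $\Pi^{\mathrm{ab}}$, which is finitely generated and hence of the form $\ZZ^r \oplus F_0$ with $F_0$ finite. Let $\Gamma < \Pi$ be the preimage of the free part $\ZZ^r$, a finite-index subgroup. Then $\rho|_\Gamma$ factors through a homomorphism $\ZZ^r \to \Aut{\bL}_0$, and because $\ZZ^r$ is free and $\mathsf{A}$ is abelian we may lift this to a homomorphism $\ZZ^r \to \mathsf{A}$ by independently choosing a preimage of each of the $r$ standard generators. Composing with $\mathsf{A} \to \T \hookrightarrow \Aut{\T}$ yields the desired action of $\Gamma$ on $\T$ lifting $\rho|_\Gamma$. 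The main obstacle is the second step: bundling all lifts of $\Aut{\bL}_0$ into a single complex Lie group and recognising its neutral component as a complex torus carrying a compatible action on $\T$; once this is done, the final lifting step is a routine application of the freeness of $\ZZ^r$.
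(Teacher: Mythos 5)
Your proposal is correct, and it reaches the conclusion by a genuinely different route from the paper. The paper first invokes Lichnerowicz's theorem to see that $\Aut{\bL}_0$ is a torus, then routes the lifting through Beauville's \emph{minimal split cover} $\T_1 \to \bL$: the uniqueness of that cover is what guarantees that every $\omega$-preserving automorphism of $\bL$ extends to $\T_1$, after which a second lifting step (along the isogeny $\Aut{\T}_0 \to \Aut{\T_1}_0$) reaches the given torus $\T$. You instead (i) deduce that $\Aut{\bL}_0$ is a torus directly from Lieberman's exact sequence \eqref{eq:lieberman-exact}, since holomorphic vector fields on $\bL$ pull back to translation-invariant fields on $\T$ and hence cannot have a nonempty proper zero locus; and (ii) build the group of \emph{all} lifts to $\T$ in one step as the fibre product $\{(f,\tilde f): p\circ\tilde f = f\circ p\}$, using homotopy lifting along the connected group $\Aut{\bL}_0$ to see that its projection to $\Aut{\bL}_0$ is surjective with kernel the finite deck group. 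This bypasses both Lichnerowicz and the minimal split cover, and works directly with the given cover rather than in two stages; the price is that your lifting argument only applies to the identity component $\Aut{\bL}_0$ (the paper's uniqueness argument lifts all of $\Autom{\omega}{\bL}$ to $\T_1$), but that is all the lemma needs after the initial finite-index reduction. The closing step --- lifting a free abelian finite-index quotient of $\Pi$ along a surjection of abelian groups by choosing commuting preimages of generators --- is the same in both arguments.
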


\begin{proof}
As in  the proof of \autoref{lem:finite-index} we may assume without loss of generality that $\rho$ is the inclusion $\Pi \hookrightarrow  \Aut{\bL}_0 < \Autom{\omega}{\bL}$ of a subgroup of the neutral component of the full automorphism group.
By a theorem of Lichnerowicz~\cite{Lichnerowicz1967}, $\Aut{\bL}_0$ is a torus. In particular, $\Pi$ is a finitely generated abelian group, and hence it has a free abelian subgroup $\Gamma < \Pi$ of finite index.
  
Let $\T_1 \to \bL$ be the minimal split cover of $\bL$ as in \cite[Section 3]{Beauville82}, and let $\T \to \T_1$ be a factorization of $\T \to \bL$ through $\T_1 \to \bL$. We may assume without loss of generality that  $\T \to \T_1$ is a morphism of tori. Let $\omega_{\T_1}$ be the pullback of the class $\omega$ to $\T_1$.
Since the minimal split cover is unique up to isomorphism, any automorphism of $\bL$ that fixes the class $\omega$ extends to an automorphism of $\T_1$ that fixes $\omega_{\T_1}$.

Let $\G < \Autom{\omega_{\T_1}}{\T_1}$ be the subgroup of automorphisms of $\T_1$ that lift automorphisms in $\Aut{\bL}_0$.  
Then $\G$ is a complex Lie subgroup of $\Autom{\omega_{\T_1}}{\T_1}$ and the natural map $\G \to \Aut{\bL}_0$ is a surjective morphism of complex Lie groups with finite kernel. This in turn implies that the neutral component $\G_0 < \Aut{\T_1}_0$ of $\G$ is a torus where $\Aut{\T_1}_0 \cong \T_1$ denotes the neutral component of the automorphism group of $\T_1$. In particular, $\G$ is abelian.
It follows that $\Gamma < \Aut{\bL}_0$ lifts to an inclusion $\Gamma < \G < \Aut{\T_1}_0$.

Let now $\Aut{\T}_0 \cong \T$ be the neutral component of the automorphism group of $\T$. Since the map $\T \to \T_1$ is a morphism of complex Lie groups, we have a surjective natural morphism $\Aut{\T}_0 \to \Aut{\T_1}_0$ of complex Lie groups. Hence the inclusion $\Gamma < \Aut{\T_1}_0$ lifts to an inclusion $\Gamma < \Aut{\T}_0$, as desired.
\end{proof}

\begin{remark}

If one keeps the hypotheses of \autoref{C:split1} and  further assumes that $\X$ is projective, then there exists a finite \'etale cover of $\X$  isomorphic to a product where the foliation $\cF$ becomes the relative tangent bundle of the projection to one of the factors; this follows by combining \autoref{C:split1} with the fact that there exists a fine moduli scheme for polarized abelian varieties of dimension $g$ with level $N$ structures, provided that $N$ is large enough.  This gives a simpler proof of \cite[Theorem 5.8]{Loray2018}.
\end{remark}

\section{Global Weinstein splitting}
\label{sec:BBW-decomp}

We now combine the results of the previous sections to establish our main result (\autoref{thm:BBW-decomp} from the introduction), whose statement we now recall:

\begin{theorem*}
\BBW
\end{theorem*}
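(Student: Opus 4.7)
The plan is to assemble the three main ingredients already in place: the existence of a subcalibration adapted to $\bL$, the involutivity of both summands it produces on a compact K\"ahler manifold, and the splitting theorem for tangent-bundle decompositions whose compact leaf has trivial canonical class and finite fundamental group. The subcalibration will do the geometric work of separating the symplectic direction of $\bL$ from a transverse Poisson direction, and once we are in the situation of \autoref{cor:split-CY-leaf}, we will only need to verify that the resulting \'etale cover is automatically Poisson.

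First, I would invoke \autoref{cor:exists-subcal} (the rephrasing of \autoref{thm:subcal}) to obtain a subcalibration $\omX$ of $(\X,\piX)$ compatible with $\bL$. This gives a decomposition $\cT{\X} = \cF \oplus \cG$ with $\cF = \img\theta$, $\cG = \ker\theta$, and $i^*\cF = \cT{\bL}$. By \autoref{thm:frelich-marcut}, $\cF$ is involutive, and $\cG$ is involutive provided $\omF$ is closed; but $\omF \in \coH[0]{\X,\forms[2]{\X}}$ is a holomorphic two-form on the compact K\"ahler manifold $\X$, so it is closed by Hodge theory. The symplectic leaf $\bL$ is therefore a leaf of the foliation $\cF$, and the triviality of its canonical class is automatic since $\bL$ is holomorphically symplectic (trivialized by a power of $\omL$). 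Combined with the hypothesis that $\pi_1(\bL)$ is finite (which forces its holonomy group under $\cF$ to be finite as well), we are in the precise setting of \autoref{cor:split-CY-leaf}.

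Applying \autoref{cor:split-CY-leaf} to the pair $(\cF,\cG)$ yields a finite \'etale cover $\phi : \bLt \times \Y \to \X$ in which $\phi^{-1}\cF$ is the tangent bundle of $\bLt$ and $\phi^{-1}\cG$ is the tangent bundle of $\Y$. Since $\bLt$ is a finite cover of the compact manifold $\bL$, it is compact; together with the compactness of $\bLt\times\Y$ (a finite cover of the compact $\X$), this forces $\Y$ to be compact, and $\Y$ inherits a K\"ahler structure because a product of complex manifolds is K\"ahler only if each factor is. It remains to verify that $\phi$ is a morphism of Poisson manifolds. By \autoref{thm:frelich-marcut}, the bivector $\piX$ decomposes as $\piX = \piF + \piG$ with $\piF \in \coH[0]{\X,\wedge^2\cF}$ and $\piG \in \coH[0]{\X,\wedge^2\cG}$ Schouten-commuting Poisson structures; moreover $\piF$ is the (fibrewise) inverse of $\omF|_\cF$, so on $\bL$ it restricts to $\omL^{-1} = \piL$. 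Pulling back along $\phi$, the summands $\piF$ and $\piG$ become bivectors tangent to the $\bLt$- and $\Y$-factors respectively, making $\phi^*\piX$ a product Poisson structure whose $\bLt$-part is the symplectic structure pulled back from $\bL$, which is exactly the statement that $\phi$ is Poisson.

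The main obstacle in this program has already been handled by the earlier sections: producing the subcalibration required the Hodge-theoretic extension argument of \autoref{lem:extend}, and upgrading a decomposition with compact leaf into a global product required the monodromy and holonomy-groupoid analysis of \autoref{thm:Ehresmann}. With those in hand, the remaining work for \autoref{thm:BBW-decomp} is essentially bookkeeping: checking that the compatibility of the subcalibration with $\bL$ makes $\bL$ a leaf of $\cF$, that the hypotheses of \autoref{cor:split-CY-leaf} apply, and that the product structure on the cover is compatible with the Poisson bracket via the Frejlich--M\u{a}rcu\cb{t} decomposition.
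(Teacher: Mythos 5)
Your proposal follows the paper's own proof essentially step for step: subcalibration via \autoref{cor:exists-subcal}, involutivity of both summands via \autoref{thm:frelich-marcut} plus Hodge theory, $c_1(\bL)=0$ from the symplectic form, and then \autoref{cor:split-CY-leaf} to produce the cover $\bLt\times\Y\to\X$.

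The one place you move too quickly is the final step. From $\piX=\piF+\piG$ with $\piF\in\coH[0]{\X,\wedge^2\cF}$ and $\piG\in\coH[0]{\X,\wedge^2\cG}$ you correctly conclude that the pulled-back bivector has no mixed component, but ``tangent to the $\bLt$-factor'' is strictly weaker than ``pulled back from $\bLt$'': a section of $\wedge^2\cT{\bLt}$ over the product is a priori a $\Y$-parametrized family of bivectors on $\bLt$, and it could vary with the $\Y$-coordinate (compare $y\,\partial_x$ on $\CC^2$, which is tangent to the first factor but not a pullback). The paper closes this by invoking the K\"unneth decomposition of $\coH[0]{\bLt\times\Y,\wedge^2\cT{\bLt\times\Y}}$, which uses compactness of the factors to show that the two non-mixed summands are exactly the pullbacks from $\bLt$ and $\Y$; you should add this (or an equivalent compactness argument) to justify that $\phi^*\piX$ is genuinely a product Poisson structure. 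With that addition the argument is complete and coincides with the paper's.
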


\begin{proof}
By \autoref{cor:exists-subcal}, there exists a subcalibration $\omX \in \coH[2]{\X,\forms[2]{\X}}$ compatible with $\pi$.  Let $\cT{\X} = \cF \oplus \cG$ be the corresponding splitting of the tangent bundle as in \autoref{sec:subcal-split}, and let $\pi = \piF +\piG$ and $\omX = \omF+\omG$ be the corresponding decompositions.  Note that $\omF$ is closed since it is holomorphic, and $\X$ is a compact K\"ahler manifold.  Therefore by \autoref{thm:frelich-marcut}, $\cF$ and $\cG$ are involutive.  Moreover, since $\bL$ is a holomorphic symplectic manifold, we have $c_1(\bL)=0$, so by \autoref{cor:split-CY-leaf} there exists an \'etale cover $\bLt \times \Y \to \X$ that induces the given splitting of the tangent bundle.

It remains to verify that the induced Poisson structure on this covering space is the sum of the pullbacks of Poisson structures on the factors.  But since $\bLt \times \Y$ is compact, this property follows immediately from the K\"unneth decomposition
\begin{align*}
\coH[0]{\bLt\times \Y, \wedge^2 \cT{\bLt\times \Y}}  \cong \coH[0]{\bLt,\wedge^2 \cT{\bLt}} \, \oplus \, \rbrac{\coH[0]{\bLt,\cT{\bLt}} \otimes \coH[0]{\Y,\cT{\Y}}} \, \oplus \, \coH[0]{\Y,\wedge^2\cT{\Y}}, \label{eqn:kunneth}
\end{align*}
and the $\pi$-orthogonality of the factors, which ensures that the induced bivector projects trivially to the middle summand in this decomposition.
\end{proof}

We conclude the paper by giving some examples which demonstrate that the conclusion of \autoref{thm:BBW-decomp} may fail if the hypotheses are weakened.
\begin{example}
The analogue of \autoref{thm:BBW-decomp} fails in the $C^\infty$ or real analytic contexts, even for Poisson structures of constant rank.

For instance, any $C^\infty$ symplectic fibre bundle (as in \cite[Chapter 6]{McDuff1998}) defines a regular Poisson manifold for which the symplectic leaves are the fibres.  Such bundles need not be trivial, even if the base and fibres are simply connected.  The simplest nontrivial example is the nontrivial $S^2$-bundle over $S^2$ underlying the odd Hirzebruch surfaces, equipped with the $C^\infty$ Poisson structure induced by a fibrewise K\"ahler form. This four-manifold is simply connected but is not diffeomorphic to $S^2\times S^2$.

Note that given a symplectic fibre bundle, we may rescale the symplectic form on the fibres by the pullback of an arbitrary nonvanishing function on the base, to obtain a Poisson manifold whose symplectic leaves are pairwise non-symplectomorphic.  In particular, even when the underlying manifold splits as a product, the Poisson structure need not decompose as a product of Poisson structure on the factors.
\end{example}

\begin{example}
The conclusion of \autoref{thm:BBW-decomp} can fail if the K\"ahler condition is dropped.  For instance, by taking the mapping torus of an infinite-order holomorphic symplectic automorphism of a K3 surface, we may construct a holomorphic symplectic fibre bundle whose total space is non-K\"ahler.  It splits only after passing to the universal cover, which has infinitely many sheets.
\end{example}

\begin{example}
The conclusion of \autoref{thm:BBW-decomp} can fail if the leaf $\bL$ has infinite fundamental group.  For example, let $\ZZ^{2n}\cong\Lambda \subset \CC^{2n}$ be a lattice, and let $\Y$ be a compact K\"ahler Poisson manifold on which $\Lambda$ acts by holomorphic Poisson isomorphisms.  Equip $\CC^{2n}$ with the standard holomorphic Poisson structure in Darboux form.  Then the quotient $\X := (\CC^{2n}\times \Y)/\Lambda$ is a compact holomorphic Poisson manifold, which is a flat fibre bundle over the symplectic base torus $\bL := \CC^{2n}/\Lambda$.  If $p \in \Y$ is a point where the Poisson structure vanishes, then $\widetilde\bL := (\CC^{2n}\times \Lambda \cdot p)/\Lambda$ defines a symplectic leaf of $\X$ for which the projection $\widetilde \bL \to \bL$ is a local diffeomorphism of holomorphic Poisson manifolds.  Two possibilities are of note: i) if $p$ is a $\Lambda$-fixed point, then $\widetilde \bL \cong \bL$, and ii) if the orbit of $p$ is infinite, then $\bL$ is non-compact.

Note that if both i) and ii) occur for some points $p \in \Y$, then $\X$ cannot split.  It is easy to construct examples of this phenomenon, e.g.~take $\Y=\PP^2$ equipped with a Poisson bivector given by an anticanonical section vanishing on the standard toric boundary divisor (a triangle).  Let $\Lambda$ act by irrational rotations of the torus.  Then the vertices of the triangle are fixed, and the smooth points of the triangle are symplectic leaves with infinite orbits.  Moreover, in this case $\X$ is K\"ahler.
\end{example}

\begin{example}
The conclusion of \autoref{thm:BBW-decomp} can fail if $\X$ is not compact. Let $\Y$ be a smooth projective manifold and let $f : \Y \to \B$ be a non-isotrivial fibration whose general fibres are $K3$ surfaces. If $\U$ is a sufficiently small
euclidean open subset of $\B$ that does not intersect the critical locus of $f$, and $\X = f^{-1}(\U)$ then $\X$ is K\"ahler, simply-connected, and admits a holomorphic Poisson structure with symplectic leaves given by the fibres of $f$, but it does not split as a product. \end{example}

\bibliographystyle{hyperamsplain}
\bibliography{bbw}

\providecommand{\bysame}{\leavevmode\hbox to3em{\hrulefill}\thinspace}
\providecommand{\MR}{\relax\ifhmode\unskip\space\fi MR }
\providecommand{\MRhref}[2]{%
  \href{http://www.ams.org/mathscinet-getitem?mr=#1}{#2}
}
\providecommand{\href}[2]{#2}
\begin{thebibliography}{10}

\bibitem{Beauville82}
A.~Beauville, \emph{Some remarks on {K}\"{a}hler manifolds with {$c_{1}=0$}},
  Classification of algebraic and analytic manifolds ({K}atata, 1982), Progr.
  Math., vol.~39, Birkh\"{a}user Boston, Boston, MA, 1983, pp.~1--26.

\bibitem{Beauville1983}
\bysame, \emph{Vari\'et\'es {K}\"ahleriennes dont la premi\`ere classe de
  {C}hern est nulle}, J. Differential Geom. \textbf{18} (1983), no.~4, 755--782
  (1984).

\bibitem{Beauville2000}
\bysame, \emph{Complex manifolds with split tangent bundle}, Complex analysis
  and algebraic geometry, de Gruyter, Berlin, 2000, pp.~61--70.

\bibitem{Bogomolov1974}
F.~A. Bogomolov, \emph{The decomposition of {K}\"ahler manifolds with a trivial
  canonical class}, Mat. Sb. (N.S.) \textbf{93(135)} (1974), 573--575, 630.

\bibitem{Druel2019}
S.~Druel, \emph{On foliations with semi-positive anti-canonical bundle},
  \href{http://dx.doi.org/10.1007/s00574-018-00128-7}{Bull. Braz. Math. Soc.
  (N.S.) \textbf{50} (2019)}, no.~1, 315--321.

\bibitem{Ehresmann1951}
C.~Ehresmann, \emph{Les connexions infinit\'{e}simales dans un espace fibr\'{e}
  diff\'{e}rentiable}, Colloque de topologie (espaces fibr\'{e}s), {B}ruxelles,
  1950, Georges Thone, Li\`ege; Masson et Cie., Paris, 1951, pp.~29--55.

\bibitem{Lichnerowicz1967}
A.~Lichnerowicz, \emph{Vari\'{e}t\'{e}s k\"{a}hleriennes et premi\`ere classe
  de {C}hern}, J. Differential Geometry \textbf{1} (1967), 195--223.

\bibitem{Lieberman78}
D.~I. Lieberman, \emph{Compactness of the {C}how scheme: applications to
  automorphisms and deformations of {K}\"{a}hler manifolds}, Fonctions de
  plusieurs variables complexes, {III} ({S}\'{e}m. {F}ran\c{c}ois {N}orguet,
  1975--1977), Lecture Notes in Math., vol. 670, Springer, Berlin, 1978,
  pp.~140--186.

\bibitem{Loray2018}
F.~Loray, J.~V. Pereira, and F.~Touzet, \emph{Singular foliations with trivial
  canonical class}, \href{http://dx.doi.org/10.1007/s00222-018-0806-0}{Invent.
  Math. \textbf{213} (2018)}, no.~3, 1327--1380.

\bibitem{McDuff1998}
D.~McDuff and D.~Salamon, \emph{Introduction to symplectic topology}, second
  ed., Oxford Mathematical Monographs, The Clarendon Press, Oxford University
  Press, New York, 1998.

\bibitem{Moerdijk2003}
I.~Moerdijk and J.~Mr{\v{c}}un,
  \href{http://dx.doi.org/10.1017/CBO9780511615450}{\emph{Introduction to
  foliations and {L}ie groupoids}}, Cambridge Studies in Advanced Mathematics,
  vol.~91, Cambridge University Press, Cambridge, 2003.

\bibitem{Pereira01}
J.~V. Pereira, \emph{Global stability for holomorphic foliations on {K}aehler
  manifolds}, \href{http://dx.doi.org/10.1007/BF02969347}{Qual. Theory Dyn.
  Syst. \textbf{2} (2001)}, no.~2, 381--384.

\bibitem{Severa2001}
P.~\v{S}evera and A.~Weinstein,
  \href{http://dx.doi.org/10.1143/PTPS.144.145}{\emph{Poisson geometry with a
  3-form background}}, no. 144, 2001, pp.~145--154. Noncommutative geometry and
  string theory (Yokohama, 2001).

\bibitem{Weinstein1983}
A.~Weinstein, \emph{The local structure of {P}oisson manifolds}, J.
  Differential Geom. \textbf{18} (1983), no.~3, 523--557.

\bibitem{Winkelnkemper1983}
H.~E. Winkelnkemper, \emph{The graph of a foliation},
  \href{http://dx.doi.org/10.1007/BF02329732}{Ann. Global Anal. Geom.
  \textbf{1} (1983)}, no.~3, 51--75.

\end{thebibliography}
\end{document}